\documentclass[12pt,reqno]{amsart}

\usepackage{amssymb,dirtytalk,comment,tikz-cd,tikz,color,cite,enumerate,setspace}
\usepackage[unicode=true]
 {hyperref}
\hypersetup{
colorlinks=true,
urlcolor=black,
citecolor=blue,
linkcolor=blue,
}
\usepackage{xltabular}
\usepackage{pdflscape}

\newtheorem{theorem}{Theorem}[section]
\newtheorem{lem}[theorem]{Lemma}
\newtheorem{prop}[theorem]{Proposition}

\theoremstyle{definition}
\newtheorem{definition}[theorem]{Definition}

\newtheorem*{notation*}{Notation}
\theoremstyle{remark}
\newtheorem{remark}[theorem]{Remark}

\numberwithin{equation}{section}

\DeclareMathOperator{\Aut}{Aut}

\newcolumntype{L}[1]{>{\raggedright\arraybackslash}p{#1}}
\newcolumntype{C}[1]{>{\centering\arraybackslash}p{#1}}
\newcolumntype{R}[1]{>{\raggedleft\arraybackslash}p{#1}}

\allowdisplaybreaks
\begin{document}

\title[The automorphism group of a skew brace]{On the triviality and non-triviality of the automorphism group of a skew brace}

\author{Cindy (Sin Yi) Tsang}
\address{Department of Mathematics, Ochanomizu University, 2-1-1 Otsuka, Bunkyo-ku, Tokyo, Japan}
\email{tsang.sin.yi@ocha.ac.jp}
\urladdr{http://sites.google.com/site/cindysinyitsang/}

\subjclass[2020]{Primary 08A35, Secondary 20F28}

\keywords{skew brace, automorphism group}

\begin{abstract}It is a simple fact that a group has a trivial automorphism group if and only if it is of order $1$ or $2$. We prove that the same holds for certain families of skew braces, and given any odd prime $p$, we construct a skew brace of order $2p^3$ that has a trivial automorphism group. \end{abstract}

\maketitle


\vspace{-5mm}

\section{Introduction}

For any group $G$, not necessarily finite, it is well-known that
\[
\Aut(G) \mbox{ is trivial} \iff G\mbox{ has order $1$ or $2$}.
\]
This is an immediate consequence of the following observations:
\begin{enumerate}[$\bullet$]
\item If $G$ is non-abelian, then $\mathrm{Inn}(G)$ is non-trivial.
\item If $G$ is abelian and $\exp(G) \geq 3$, then the inversion map $g\mapsto g^{-1}$ is a non-trivial element of $\Aut(G)$.
\item If $\exp(G)=2$ and $|G| \geq 3$, then $G$ may be treated as a vector space over $\mathbb{F}_2$ with $\dim_{\mathbb{F}_2}(G) \geq 2$, and any non-trivial permutation of some fixed set of basis vectors induces a non-trivial element of $\Aut(G)$.
\item If $|G|\leq 2$, then obviously $\Aut(G)$ is trivial.
\end{enumerate}
The purpose of this paper is to study the triviality or non-triviality of the automorphism group of a skew brace.

A \textit{skew brace} is any set $A = (A,\cdot,\circ)$ equipped with two group operations such that the so-called brace relation
\[ a \circ (b\cdot c) = (a\circ b)\cdot a^{-1}\cdot (a\circ c)\]
holds for all $a,b,c\in A$. For any $a\in A$, we use $a^{-1}$ and $\overline{a}$, respectively, to denote its inverses in the \textit{additive} group $(A,\cdot)$ and the \textit{multiplicative} group $(A,\circ)$. We also use $1$ to denote the common identity element of the two groups. The automorphism group of $A$ is defined to be
\[ \Aut(A) = \Aut(A,\cdot)\cap \Aut(A,\circ).\]
If $|A|=1$ or $2$, then clearly $\Aut(A)$ is trivial. However, the converse is false. Using the \texttt{YangBaxter} package \cite{YangBaxter} in \texttt{GAP} \cite{GAP}, we found that
\[ \texttt{SmallSkewbrace(24,855)},\quad \texttt{SmallSkewbrace(54,i)},\]
where $157\leq i \leq 164$ and $721\leq i\leq 722$, have a trivial automorphism group. See Section \ref{sec:24} for a construction of the skew brace of order $24$.

A complete classification of the skew braces with a trivial automorphism group seems to be a very challenging problem. The objective of this paper is to make some progress by showing that:
\begin{enumerate}[(i)]
\item For certain families of skew braces, it is true that having order at least $3$ implies that the automorphism group is non-trivial.
\item For any odd prime $p$, there exists a skew brace of order $2p^3$ whose automorphism group is trivial.
\end{enumerate}
Recall that a skew brace $A = (A,\cdot,\circ)$ is said to be \textit{two-sided} if
\[ (b\cdot c)\circ a = (b\circ a) \cdot a^{-1}\cdot (c\circ a)\]
holds for all $a,b,c\in A$, and is said to be a \textit{bi-skew brace} if 
\begin{equation}\label{eqn:bi} a\cdot (b\circ c) = (a\cdot b)\circ \overline{a} \circ (a\cdot c)\end{equation}
holds for all $a,b,c\in A$. Here are our main results:

\begin{theorem}\label{thm}
    Let $A= (A,\cdot,\circ)$ be a skew brace with $|A|\geq 3$ such that 
    \begin{enumerate}[$(1)$]
    \item $A$ is a two-sided skew brace with $(A,\circ)$ non-abelian; or
     \item $A$ is a bi-skew brace; or
    \item $A$ is finite and $(A,\cdot), (A,\circ)$ are both nilpotent.
    \end{enumerate}
    Then $\Aut(A)$ is non-trivial.
\end{theorem}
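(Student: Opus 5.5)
We look for explicit non-trivial automorphisms in each case. The natural candidates are conjugations, since these automatically preserve one of the operations. For $a\in A$ write $\lambda_a(b)=a^{-1}\cdot(a\circ b)$; the brace relation says exactly that $\lambda_a\in\Aut(A,\cdot)$, and $a\mapsto\lambda_a$ is a homomorphism $(A,\circ)\to\Aut(A,\cdot)$.

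\emph{Case (1).} Take conjugation in $(A,\circ)$, namely $\phi_a(b)=a\circ b\circ\overline{a}$.
1. $\phi_a\in\Aut(A,\circ)$ trivially.
2. To see that $\phi_a$ preserves $\cdot$, combine the left brace relation with the right relation applied to $\overline{a}$. This gives
\[ a\circ (b\cdot c)\circ \overline{a}=\big((a\circ b)\cdot a^{-1}\cdot (a\circ c)\big)\circ\overline{a}. \]
Expanding the right side with the right relation yields
\[ (a\circ b\circ\overline a)\cdot \overline a^{-1}\cdot (a^{-1}\circ \overline a)\cdot \overline a^{-1}\cdot (a\circ c\circ \overline a), \]
after iterating the two-sided relation on the three-fold product.
3. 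The inner term $\overline a^{-1}\cdot(a^{-1}\circ\overline a)\cdot\overline a^{-1}$ should collapse to $1$. To check this, apply the right relation to $1=a^{-1}\cdot a$ composed with $\circ\,\overline a$.
4. Hence $\phi_a\in\Aut(A)$. Since $(A,\circ)$ is non-abelian, some $\phi_a$ is non-trivial.

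\emph{Case (2).} For a bi-skew brace the relation \eqref{eqn:bi} says symmetrically that $b\mapsto a\cdot b$ behaves well for $\circ$. The standard consequence is that conjugation in $(A,\cdot)$, $\psi_a(b)=a\cdot b\cdot a^{-1}$, lies in $\Aut(A,\circ)$. It clearly lies in $\Aut(A,\cdot)$, so $\psi_a\in\Aut(A)$, and we are done unless $(A,\cdot)$ is abelian.

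If $(A,\cdot)$ is abelian, $A$ is a brace and the bi-skew condition becomes $\lambda_{\lambda_a(b)}=\lambda_b$. I would then use inversion $b\mapsto b^{-1}$ when $\exp(A,\cdot)\ge3$. Inversion is additive since $(A,\cdot)$ is abelian. For multiplicativity, use $a\circ b=a\cdot\lambda_a(b)$ together with $\lambda_{a^{-1}}=\lambda_a^{-1}$, which follows from $a^{-1}=\lambda_a(\overline a)$, the bi-skew identity, and $\lambda_{\overline a}=\lambda_a^{-1}$. One expects $\lambda_{a^{-1}}=\lambda_a$ to hold in bi-skew braces, which is what is needed; this step needs care.

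If instead $\exp=2$, try permuting a basis, or choose a $\lambda_a\neq\mathrm{id}$: indeed $\lambda_a$ commutes with all $\lambda_b$ in the bi-skew setting, so it is plausibly in $\Aut(A)$. If all $\lambda$ are trivial, $A$ is a trivial brace and $\Aut(A)=\Aut(A,\cdot)$ is non-trivial.

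\emph{Case (3).} The plan is to use coprime decomposition. Both groups are nilpotent, so $A$ decomposes as a direct product of skew braces of prime-power order (the Sylow subgroups coincide and are ideals). An automorphism can then be built on one factor and extended by the identity elsewhere, which reduces us to $|A|=p^n$. For a brace of order $p^n$, use a power map $b\mapsto b^{k}$ for suitable $k$ acting compatibly, or an inner-type map $\lambda_a$ with $a$ central in both groups.

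The main obstacle is the $p$-group case, including $p=2$. There I would try $b\mapsto \overline{?}$ and, failing that, use the socle/annihilator, which is non-trivial by nilpotency, to produce a non-trivial automorphism of the form $b\mapsto b\cdot f(b)$ with $f$ a homomorphism into the annihilator. This should be compared with the classical proof that $p$-groups of order $\ge3$ have non-trivial automorphisms. I am least sure this step goes through cleanly.
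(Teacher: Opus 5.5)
Your Case (1) is the paper's argument (inner automorphisms of $(A,\circ)$), and the cancellation you indicate is correct: $(a^{-1}\cdot a)\circ\overline a=\overline a$ gives $(a^{-1}\circ\overline a)\cdot\overline a^{-1}=\overline a$. Cases (2) and (3), however, have genuine gaps.

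\textbf{Case (2).} The claimed ``standard consequence'', that $b\mapsto a\cdot b\cdot a^{-1}$ lies in $\Aut(A,\circ)$, is false for bi-skew braces. Take $(A,\cdot)=S_3=N\rtimes H$ with $N=\langle(123)\rangle$, $H=\langle(12)\rangle$, and $(nh)\circ(n'h')=nn'h'h$. Then $\lambda_{nh}(x)=h^{-1}xh$, and one checks $\lambda_{\lambda_a(b)}=\lambda_a\lambda_b\lambda_a^{-1}$, so $A$ is bi-skew. But $(A,\circ)\cong N\times H$ is cyclic of order $6$ with unique involution $(12)$, and conjugation by $(13)$ moves it.

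Your abelian sub-case also breaks. The brace $(\mathbb{Z}/p^2\mathbb{Z},+,\circ)$ with $a\circ b=a+b+pab$ and $p$ odd is bi-skew. However, $\lambda_1$ and $\lambda_{-1}$ are multiplication by $1+p$ and $1-p$ respectively, so $\lambda_{-1}\neq\lambda_1$ and inversion is not in $\Aut(A,\circ)$.

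The idea you only float for exponent $2$ is the one that works, and it works uniformly. Since $a\cdot b=a\circ\lambda_{\overline a}(b)$, every skew brace satisfies $\overline a\circ(a\cdot b)=\lambda_a^{-1}(b)$. Multiplying \eqref{eqn:bi} on the left by $\overline a$ in $\circ$ says precisely that this map is an endomorphism of $(A,\circ)$. Hence every $\lambda_a$ lies in $\Aut(A)$, and some $\lambda_a\neq\mathrm{id}$ unless the two operations coincide.

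\textbf{Case (3).} Your reduction to $|A|=p^n$ agrees with the paper, but the prime-power case, which is the real content, is not proved. Your candidate maps are all unsupported:
\begin{enumerate}
\item Power maps need not be automorphisms of $(A,\cdot)$, which may be non-abelian, and nothing forces them to respect $\circ$.
\item You give no reason why some $a$ central in both groups has $\lambda_a\neq\mathrm{id}$, nor why such a $\lambda_a$ would preserve $\circ$.
\item The annihilator is not non-trivial ``by nilpotency''. There exist braces of prime-power order with trivial socle; equivalently, there are regular subgroups of $\mathrm{AGL}_n(\mathbb{F}_p)$ containing no non-trivial translation.
\end{enumerate}

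The missing idea is a fixed-point argument producing $a\notin\ker(\lambda)$ with $\lambda_a\in\Aut(A)$. Assume $\mathrm{Im}(\lambda)\neq 1$.
\begin{enumerate}
\item For $x\in\ker(\lambda)$ one has $\rho_a(x)=a\circ x\circ\overline a$. So $\ker(\lambda)$ is $\mathrm{Im}(\rho)$-invariant.
\item Hence the $p$-group $(A,\circ)$ acts via $\rho$ on $\ker(\lambda)\backslash(A,\cdot)$. This set has size $|\mathrm{Im}(\lambda)|$, a non-trivial power of $p$, and $\ker(\lambda)$ is a fixed point.
\item The class equation gives another fixed coset $\ker(\lambda)a$. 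That is, $c\circ a=z_c\cdot a\cdot c$ with $z_c\in\ker(\lambda)$ for every $c$.
\item Put $c=\lambda_a(b)$, so that $a\cdot c=a\circ b$. This yields $\lambda_{\lambda_a(b)}\lambda_a=\lambda_{c\circ a}=\lambda_{z_c\circ a\circ b}=\lambda_a\lambda_b$.
\item By the criterion that $f\in\Aut(A,\cdot)$ preserves $\circ$ iff $\lambda_{f(b)}=f\lambda_bf^{-1}$ for all $b$, the map $\lambda_a$ is a non-trivial element of $\Aut(A)$. This works for every prime $p$, including $p=2$.
\end{enumerate}
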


\begin{theorem}\label{thm'}
    For any odd prime $p$, there is a skew brace of order $2p^3$ whose automorphism group is trivial.
\end{theorem}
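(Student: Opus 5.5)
The plan is to prove the statement by an explicit construction. I will realise a skew brace of order $2p^3$ through the standard dictionary between skew braces with additive group $G$ and regular subgroups $N\le \mathrm{Hol}(G)=G\rtimes\Aut(G)$. Under this dictionary, $\Aut(A)$ is identified with the set of $\theta\in\Aut(G)$ that normalise $N$ under conjugation in $\mathrm{Hol}(G)$. Concretely, I will define the brace by
\[ a\circ b = a\cdot \lambda_a(b),\qquad \lambda:A\to \Aut(A,\cdot). \]
Here $\lambda$ is chosen so that the brace relation holds automatically. A convenient choice is $\lambda$ factoring through a quotient of $(A,\cdot)$ on which the cocycle condition $\lambda_{a\cdot\lambda_a(b)}=\lambda_a\lambda_b$ reduces to $\lambda$ being a homomorphism that is constant on cosets of a $\lambda$-invariant normal subgroup. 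In this language, $\theta\in\Aut(A,\cdot)$ lies in $\Aut(A)$ if and only if
\[ \theta\lambda_a\theta^{-1}=\lambda_{\theta(a)}\quad\text{for all }a\in A. \]
This equivariance condition is what I will exploit.

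For the additive group I would take $G=(A,\cdot)$ with a normal Sylow $p$-subgroup $P$ of order $p^3$ and a complement $\langle t\rangle\cong\mathbb{Z}/2$. The guiding examples are the order-$54$ skew braces found by \texttt{GAP} for $p=3$, and I would read off their additive group and $\lambda$-map and then generalise. I expect something like $P=\mathbb{Z}/p\times\mathbb{Z}/p\times\mathbb{Z}/p$ or $\mathbb{Z}/p^2\times\mathbb{Z}/p$, with $t$ acting on $P$ by inverting some coordinates but not others. The map $\lambda$ would send $t$ and a chosen element of $P$ to automorphisms that are deliberately \emph{not} compatible with the natural symmetries of $G$. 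The following symmetries must all be broken:
\begin{enumerate}[$\bullet$]
\item conjugation by elements of $G$;
\item the scalar automorphisms $x\mapsto x^k$ on $P$;
\item the automorphisms moving the complement $t\mapsto xt$.
\end{enumerate}
Then I verify the brace relation, i.e.\ that $\{(a,\lambda_a)\}$ is a regular subgroup of $\mathrm{Hol}(G)$, by a direct check on generators.

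The key steps, in order, are as follows.
\begin{enumerate}[(1)]
\item Fix $G$ and $\lambda$, and check that $\circ$ is a group operation satisfying the brace relation.
\item Compute $\Aut(G)$ explicitly. Since $P$ is characteristic, every $\theta$ restricts to $\theta|_P\in\Aut(P)$ commuting with the action of $t$ up to the twist, and sends $t\mapsto x_\theta t$ with $x_\theta\in P$; this gives $\Aut(G)$ as a concrete matrix-plus-translation group.
\item Impose $\theta\lambda_a\theta^{-1}=\lambda_{\theta(a)}$ first for $a=t$, which should force $x_\theta$ into a small subgroup and constrain $\theta|_P$ to a centraliser.
\item Impose the condition for $a$ running through generators of $P$. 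Because $\lambda$ is non-linear in $a$ (it depends on coordinates with different weights), this should force the remaining scalars to be $1$ and $x_\theta=1$, so $\theta=\mathrm{id}$.
\end{enumerate}

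The main obstacle is choosing $\lambda$ correctly in the first place. It must be rigid enough to kill every element of $\Aut(G)$, including inner automorphisms and the inversion-type maps that survive in Theorem~\ref{thm}. At the same time it must still satisfy the cocycle condition, so that $(A,\circ)$ is a group. Note that both groups cannot be nilpotent by Theorem~\ref{thm}(3), and the brace cannot be bi-skew by part (2). So I will make $\lambda$ non-homomorphic, or $(A,\circ)$ non-nilpotent, and I would guide the choice by the $p=3$ data. Once $\lambda$ is fixed, steps (2)--(4) are a routine, if lengthy, linear-algebra computation over $\mathbb{F}_p$.
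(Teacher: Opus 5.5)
Your proposal has a genuine gap: it never produces a skew brace. You do not fix the additive group (you leave open $\mathbb{Z}/p\times\mathbb{Z}/p\times\mathbb{Z}/p$ versus $\mathbb{Z}/p^2\times\mathbb{Z}/p$, with an unspecified action of $t$), and you never write down $\lambda$. You yourself name ``choosing $\lambda$ correctly'' as the main obstacle, to be overcome by reading off the \texttt{GAP} data for $p=3$ and generalising. For an existence statement like this, the whole content is an explicit construction valid for every odd $p$, together with a proof that its automorphism group is trivial. Until $G$ and $\lambda$ are specified, none of your steps (1)--(4) can be carried out. Nothing in the plan guarantees that the order-$54$ examples belong to a family that stays rigid for all $p$.

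The one concrete mechanism you offer for step (1) is also inaccurate. If $\lambda$ is a homomorphism on $(A,\cdot)$, then $\lambda_{a\cdot\lambda_a(b)}=\lambda_a\lambda_b$ is equivalent to $\lambda_{\lambda_a(b)}=\lambda_b$. That is, $\mathrm{Im}(\lambda)$ must act trivially on $A/\ker(\lambda)$, which is strictly stronger than $\lambda$ being constant on cosets of some $\lambda$-invariant normal subgroup. At the end you say you will make $\lambda$ non-homomorphic anyway, which leaves no method at all for checking the brace relation.

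The paper fills this gap with a structured construction. It starts from the brace $B=(\mathbb{F}_p^3,+,\circ)$ with $\vec{x}\circ\vec{y}=(x_1+y_1+x_3y_2,\,x_2+y_2,\,x_3+y_3)^T$, whose automorphism group is computed explicitly in Lemma \ref{lem:Aut(B)}. It extends $B$ by $C=\mathbb{F}_2$ using Theorem \ref{thm:construction}. This reduces the brace relation to the matrix identities \eqref{eqn:construction1}--\eqref{eqn:construction2} for three order-two matrices $\phi_1,\gamma_1,\psi_1$, which are checked in Lemma \ref{lem:conditions}.

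Rigidity then comes from Proposition \ref{prop:trivial Aut1}. Since $\mathbb{F}_p^3\times\{0\}$ is the normal Sylow $p$-subgroup of $(A,\cdot)$, any $f\in\Aut(A)$ restricts to some $\beta\in\Aut(B)$. It also sends $(\vec{0},1)$ to an element of order two in both groups. The relation $\beta\phi_1=\phi_1\beta$ forces $u=v$. The $(1,2)$-entry of $\beta\gamma_1=\gamma_1\lambda_{\vec{y}}\beta$, together with \eqref{eqn:lambda}, yields two linear equations in $s$ whose compatibility reads $u(1-u)\bigl(\delta_3+\frac{1}{4}\delta_1\delta_2(1-\epsilon)-\epsilon\delta_4\bigr)=0$. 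The parameter condition \eqref{eqn:delta} is precisely what forces $u=1$, after which $\vec{y}=\vec{0}$ and $s=0$.

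Your step (4) would need some explicit choice of this kind, with a non-degeneracy condition playing the role of \eqref{eqn:delta}, and it does not contain one.
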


To prove Theorem \ref{thm'}, we shall introduce a method of constructing skew braces (Theorem \ref{thm:construction}) that is an extension of \cite[Theorem 3.2]{MM}.

\section{Preliminaries}

Let $A = (A,\cdot,\circ)$ be a skew brace. For each $a\in A$, define
\begin{align*}
\lambda_a : (A,\cdot) \rightarrow (A,\cdot);& \quad \lambda_a(b)= a^{-1}\cdot (a\circ b),\\
\rho_a : (A,\cdot)\rightarrow (A,\cdot); & \quad \rho_a(b) = (a\circ b)\cdot a^{-1},\end{align*}
which are clearly automorphisms of $(A,\cdot)$. It is well-known that
\begin{align*}
\lambda : (A,\circ)\rightarrow \Aut(A,\cdot);& \quad a\mapsto \lambda_a\\
\rho : (A,\circ)\rightarrow \Aut(A,\cdot);&\quad a\mapsto \rho_a
\end{align*}
are group homomorphisms. The two group operations are linked by 
\[ a\circ b = a\cdot \lambda_a(b),\quad a\cdot b =a\circ \lambda_{\overline{a}}(b),\quad \overline{a} = \lambda_{\overline{a}}(a^{-1})\]
\[ a\circ b = \rho_a(b)\cdot a,\quad b\cdot a = a\circ \rho_{\overline{a}}(b),\quad \overline{a}=\rho_{\overline{a}}(a^{-1}) \]
for all $a,b\in A$. Let us also recall the following basic definitions.

\begin{definition} A subset $I$ of $A$ is said to be:
\begin{enumerate}[(1)]
\item a \textit{sub-skew brace} if it is a subgroup of both $(A,\cdot)$ and $(A,\circ)$;
\item a \textit{left ideal} if it is a subgroup of $(A,\cdot)$ invariant under $\mathrm{Im}(\lambda)$;
\item an \textit{ideal} if it is a left ideal that is normal in both $(A,\cdot)$ and $(A,\circ)$.
\end{enumerate}
Note that if $I$ is a left ideal, then it is automatically a sub-skew brace, and we have $a\cdot I=a\circ I$ for all $a\in A$.
\end{definition}

Clearly $\ker(\lambda)$ and $\ker(\rho)$ are normal subgroups of $(A,\circ)$. Since
\[ a\circ b = \begin{cases}
a \cdot b & \mbox{for all }a\in \ker(\lambda),\, b\in A,\\
b \cdot a & \mbox{for all }a\in \ker(\rho),\, b\in A,
\end{cases}\]
we see that both of them are sub-skew braces. It is known that $\ker(\rho)$ is always a left ideal, but $\ker(\lambda)$ need not be a left ideal in general.

\begin{lem}\label{lem:invariant} The following hold:
\begin{enumerate}[$(a)$]
\item $\ker(\rho)$ is invariant under $\mathrm{Im}(\lambda)$;
\item $\ker(\lambda)$ is invariant under $\mathrm{Im}(\rho)$.
\end{enumerate}
\end{lem}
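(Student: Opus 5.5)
The plan is to show that in each case the relevant map sends an element of the kernel to a conjugate of it in $(A,\circ)$. Since $\ker(\lambda)$ and $\ker(\rho)$ are kernels of the homomorphisms $\lambda,\rho$ defined on $(A,\circ)$, they are normal subgroups of $(A,\circ)$, so such a conjugate lies back in the kernel. The only inputs are the displayed identities
\[ a\circ b = a\cdot b \ (a\in\ker(\lambda)),\qquad a\circ b = b\cdot a\ (a\in \ker(\rho)),\]
together with the definitions $\lambda_b(a)=b^{-1}\cdot(b\circ a)$ and $\rho_c(a)=(c\circ a)\cdot c^{-1}$.

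For $(a)$, fix $a\in\ker(\rho)$ and $b\in A$, and put $x = b\circ a\circ \overline{b}$. By normality $x\in\ker(\rho)$. Applying the second identity with $x\in\ker(\rho)$ gives $x\circ b = b\cdot x$. On the other hand $x\circ b = b\circ a$, so
\[ b\circ a = b\cdot x. \]
Hence
\[ \lambda_b(a) = b^{-1}\cdot(b\circ a) = x = b\circ a\circ\overline{b}, \]
which lies in $\ker(\rho)$. Since this holds for every $b$, the subgroup $\ker(\rho)$ is invariant under $\mathrm{Im}(\lambda)$.

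For $(b)$, the argument is symmetric. Fix $a\in\ker(\lambda)$ and $c\in A$, and put $y=c\circ a\circ\overline{c}$, which lies in $\ker(\lambda)$ by normality. Using $y\in\ker(\lambda)$ we get $y\circ c = y\cdot c$. We also have $y\circ c = c\circ a$, so
\[ c\circ a = y\cdot c. \]
Therefore
\[ \rho_c(a) = (c\circ a)\cdot c^{-1} = y = c\circ a\circ\overline{c}, \]
which lies in $\ker(\lambda)$, as required.

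I expect no real obstacle. The only point needing care is to apply the "kernel" identity to the conjugate $x$ (respectively $y$) rather than to $a$ itself. Done that way, $\lambda_b$ restricted to $\ker(\rho)$ and $\rho_c$ restricted to $\ker(\lambda)$ are identified with conjugation in $(A,\circ)$.
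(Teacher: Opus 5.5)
Your proof is correct and is essentially the paper's argument: the paper also shows $\rho_a(x)=a\circ x\circ\overline{a}$ for $x\in\ker(\lambda)$ and concludes by normality of $\ker(\lambda)$ in $(A,\circ)$; it gets there by expanding $a\circ(x\cdot\overline{a})$ via $\rho_a$, whereas you apply the kernel identity directly to the conjugate. Your part (a) is the mirror-image argument, which the paper omits and cites from the literature.
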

\begin{proof} We omit the proof of (a), which can be found in \cite[Proposition 2.2]{left simple}, because we only need (b) and the argument is very similar. 

For any $x\in \ker(\lambda)$ and $a\in A$, we can write
\begin{align*}
a\circ x\circ \overline{a} & = a\circ (x\cdot \overline{a})\\
& = \rho_a(x\cdot \overline{a})\cdot a\\
& = \rho_a(x)\cdot  a^{-1}\cdot a\\
& = \rho_a(x).
\end{align*}
Since $\ker(\lambda)$ is a normal subgroup of $(A,\circ)$, this implies that $\ker(\lambda)$ is invariant under the action of $\mathrm{Im}(\rho)$.
\end{proof}

The next lemma is helpful when studying $\Aut(A)$.

\begin{lem}\label{lem:circle hom} Let $f\in \Aut(A,\cdot)$. Then
\[ f\in \Aut(A,\circ) \iff \lambda_{f(a)} = f\lambda_af^{-1}\mbox{ for all }a\in A.\]
\end{lem}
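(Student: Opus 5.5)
We prove both directions directly from the identity $a\circ b = a\cdot \lambda_a(b)$. Throughout, $f\in\Aut(A,\cdot)$ is fixed.

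For ($\Rightarrow$), assume $f\in\Aut(A,\circ)$ and fix $a,b\in A$. Since $f$ respects both operations,
\[ f(a)\cdot f(\lambda_a(b)) = f(a\cdot\lambda_a(b)) = f(a\circ b) = f(a)\circ f(b) = f(a)\cdot \lambda_{f(a)}(f(b)).\]
Cancelling $f(a)$ in $(A,\cdot)$ gives $f\lambda_a(b) = \lambda_{f(a)}f(b)$. This holds for all $b$, so $f\lambda_a = \lambda_{f(a)}f$. Because $f$ is bijective, this rearranges to $\lambda_{f(a)} = f\lambda_a f^{-1}$.

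For ($\Leftarrow$), assume $\lambda_{f(a)} = f\lambda_af^{-1}$ for every $a$. For any $a,b$ we compute
\[ f(a\circ b) = f(a\cdot\lambda_a(b)) = f(a)\cdot f\lambda_a(b) = f(a)\cdot \lambda_{f(a)}(f(b)) = f(a)\circ f(b),\]
using the hypothesis in the form $f\lambda_a = \lambda_{f(a)}f$. Hence $f$ is a homomorphism of $(A,\circ)$. It is already a bijection of the underlying set $A$, since it is an automorphism of $(A,\cdot)$, so $f\in\Aut(A,\circ)$.

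Both directions are just the displayed computation read forwards or backwards, so there is no real obstacle. The only points to check are that cancellation takes place in the additive group $(A,\cdot)$, and that bijectivity of $f$ is inherited from $f\in\Aut(A,\cdot)$ rather than needing a separate proof.
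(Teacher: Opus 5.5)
Your proof is correct and follows essentially the same route as the paper: both expand $f(a\circ b)=f(a)\cdot f\lambda_a(b)$ and $f(a)\circ f(b)=f(a)\cdot\lambda_{f(a)}f(b)$ and cancel $f(a)$ in $(A,\cdot)$. The paper simply packages your two directions as a single pointwise equivalence $f(a\circ b)=f(a)\circ f(b)\iff f\lambda_a(b)=\lambda_{f(a)}f(b)$.
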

\begin{proof} For any $a,b \in A$, we have
\[ f(a\circ b) = f(a\cdot \lambda_a(b)) ,\quad f(a)\circ f(b) =f(a) \cdot \lambda_{f(a)}(f(b)).\]
Since $f$ preserves the operation $\cdot$, we deduce that
\[ f(a\circ b) = f(a)\circ f(b) \iff (f\lambda_a)(b) = (\lambda_{f(a)}f)(b),\]
and the claim now follows.
\end{proof}

\section{Proof of Theorem \ref{thm}}

In what follows, let $A=(A,\cdot,\circ)$ be a skew brace with $|A|\geq 3$. We may assume that $\mathrm{Im}(\lambda)$ is non-trivial, for otherwise the two operations $\cdot$ and $\circ$ are equal, and $\Aut(A) = \Aut(A,\cdot)$ is non-trivial because $(A,\cdot)$ is a group of order at least $3$.

\subsection{Part (1)} For any $a\in A$, consider the inner automorphism
\[ \iota_a : (A,\circ) \rightarrow (A,\circ);\quad \iota_a(b) = \overline{a}\circ b \circ a\]
of $(A,\circ)$ induced by $a$. Since $A$ is two-sided, as is known in the literature, this map is also an automorphism of $(A,\cdot)$. Indeed, we have
\begin{align*}
\iota_a(b\cdot c) & = \overline{a}\circ (b\cdot c) \circ a\\
& = \left((\overline{a}\circ b)\cdot \overline{a}^{-1}\cdot (\overline{a}\circ c) \right) \circ a\\
& = (\overline{a}\circ b\circ a)\cdot a^{-1}\cdot (\overline{a}^{-1}\circ a) \cdot a^{-1}\cdot (\overline{a}\circ c\circ a)\\
& = \iota_a(b)\cdot \iota_a(c)
\end{align*}
for all $b,c\in A$. In the last equality, we used the fact that
\begin{align*}
a^{-1}\cdot (\overline{a}^{-1}\circ  a) \cdot a^{-1}
 & = a^{-1}\cdot (\overline{a}^{-1}\circ a)\cdot a^{-1}\cdot (\overline{a}\circ a)\\
 & = a^{-1}\cdot \left((\overline{a}^{-1}\cdot \overline{a}) \circ a\right)\\
  & = 1.
 \end{align*}
Since $(A,\circ)$ is non-abelian, we can find $a\notin Z(A,\circ)$ and the map $\iota_a$ is a non-trivial element of $\Aut(A)$.

\begin{remark} 
Instead of requiring $A$ to be two-sided, we only need the existence of an element $a\not\in Z(A,\circ)$ such that
\begin{equation}\label{eqn:right brace}
(b\cdot c)\circ a = (b\circ a)\cdot a^{-1}\cdot (c\circ a)
\end{equation}
holds for all $b,c\in A$, and the same proof shows that $\Aut(A)$ contains the non-trivial element $\iota_a$. In \cite{novel}, the elements $a\in A$ that satisfy \eqref{eqn:right brace} are called \textit{$u$-distributive}, and they give rise to set-theoretic solutions to the Yang-Baxter equation.\end{remark}

\subsection{Part (2)} 

Since we have \eqref{eqn:bi}, by exchanging the roles of the two operations $\cdot$ and $\circ$, for each $a\in A$, similarly the map
\[ \lambda_a' : (A,\circ) \rightarrow (A,\circ) ;\quad \lambda_a'(b) = \overline{a}\circ (a\cdot b)\]
is an automorphism of $(A,\circ)$. For any $a\in A$, observe that
\begin{align*}
\lambda_a'(b) & = \overline{a}\circ (a\cdot b)\\
& = \overline{a}\cdot \lambda_{\overline{a}}(a\cdot b)\\
& = \lambda_{\overline{a}}(a^{-1})\cdot \lambda_{\overline{a}}(a)\cdot \lambda_{\overline{a}}(b)\\
& = \lambda_a^{-1}(b)\end{align*}
for all $b\in A$. Thus, we have $\lambda_a' = \lambda_a^{-1}$, which in particular means that $\lambda_a$ is also an automorphism of $(A,\circ)$. We note that this fact is known by \cite{Caranti}. 
It now follows that
$\lambda_a\in \Aut(A)\mbox{ for all } a\in A$, 
and the claim now follows because $\mathrm{Im}(\lambda)$ is non-trivial.

\subsection{Part (3)}

The Sylow subgroups $P_1,\dots,P_d$ of $(A,\cdot)$ are ideals of $A$ because $(A,\cdot)$ and $(A,\circ)$ are both nilpotent. We then have
\begin{align*}
 A &= \prod_{i=1}^{d} (P_i,\cdot,\circ),\\
\Aut(A) &=  \prod_{i=1}^{d}\Aut(P_i,\cdot,\circ).
\end{align*}
Thus, it suffices to prove the claim when $|A|$ is a prime power, $p^n$ say.

Now, the group $(A,\circ)$ acts on $(A,\cdot)$ via the homomorphism $\rho$. Since $\ker(\lambda)$ is invariant under $\mathrm{Im}(\rho)$ by Lemma \ref{lem:invariant}, this induces an action
\begin{align*} (A,\circ)\times  \ker(\lambda)\backslash(A,\cdot)&\rightarrow\ker(\lambda)\backslash (A,\cdot)\\[6pt]
\quad (c,\ker(\lambda)a)&\mapsto \ker(\lambda)\rho_c(a)\end{align*}
of the $p$-group $(A,\circ)$ on the right coset space $ \ker(\lambda)\backslash(A,\cdot)$, whose size is a non-trivial power of $p$ because $\mathrm{Im}(\lambda)$ is non-trivial. Plainly $\ker(\lambda)$ is a fixed point, so the class equation gives another fixed point $\ker(\lambda)a$ with $a\not\in \ker(\lambda)$. For any $c\in A$, there then exists $z_c\in \ker(\lambda)$ such that $\rho_c(a)=z_ca$, which we rewrite as 
$\lambda_c(a) = c^{-1}z_cac$. We deduce that 
\[ \lambda_{\lambda_a(b)} = \lambda_a\lambda_b\lambda_a^{-1}\]
for all $b\in A$. Indeed, for $c = \lambda_a(b)$ and $z_c\in \ker(\lambda)$ as above, we have
\begin{align*}
\lambda_{\lambda_a(b)}\lambda_a& = \lambda_{c\circ a}\\
& = \lambda_{c\cdot \lambda_c(a)}\\
& = \lambda_{c\cdot c^{-1}z_cac}\\
& = \lambda_{z_c \circ ( a\lambda_a(b))}\\
& = \lambda_a\lambda_b.
\end{align*}
By Lemma \ref{lem:circle hom}, this shows that $\lambda_a$ is a non-trivial element of $\Aut(A)$.

\section{A method of constructing skew braces}

The next theorem is a slight generalization of \cite[Theorem 3.2]{MM}. For the convenience of the reader, let us recall its motivation.

Let $A = (A,\cdot,\circ)$ be any skew brace. Suppose that $A$ has an ideal $B$ and a sub-skew brace $C$ such that
\[ (A,\cdot) = (B,\cdot) \rtimes (C,\cdot),\quad (A,\circ) = (B,\circ) \rtimes (C,\circ).\]
These two semidirect product decompositions yield homomorphisms
\[ \phi : (C,\cdot)\rightarrow \Aut(B,\cdot),\quad \psi : (C,\circ) \rightarrow \Aut(B,\circ)\]
such that for any $b,b'\in B$ and $c,c'\in C$, we have
\begin{align*}
(b\cdot c)\cdot (b'\cdot c') & = (b\cdot \phi_c(b'))\cdot (c\cdot c'),\\
(c\circ b)\circ (c'\circ b') & = (c\circ c') \circ (\psi_{c'}^{-1}(b) \circ b').
\end{align*}
Since $B$ is a left ideal of $A$, we also have a homomorphism
\[ \gamma : (C,\circ) \rightarrow \Aut(B,\cdot);\quad c\mapsto \gamma_c:=\lambda_c|_B.\]
By identifying $b\cdot c$ with $(b,c)$ and therefore 
\[ c\circ b = c\cdot \lambda_c(b) = (\phi_c\gamma_c)(b)\cdot c\]
with $((\phi_c\gamma_c)(b),c)$, we then obtain the construction below. To simplify calculations, we assume that $(B,\cdot) = (B,+)$ is abelian, where we now use
$0$ to denote its identity, and the two operations coincide on $C$. We remark that \cite[Theorem 3.2]{MM} is the special case when $B = (B,+,+)$.
\begin{theorem}\label{thm:construction} Let $B = (B,+,\circ)$ be a skew brace with abelian additive group $(B,+)$, and let $C = (C,\cdot)$ be a group. For any homomorphisms
\[\phi,\gamma : C\rightarrow\Aut(B,+),\quad \psi : C \rightarrow \Aut(B,\circ)\]
satisfying the equalities
\begin{equation}\label{eqn:construction1}\gamma_c\lambda_{\psi_c^{-1}(b)} = \lambda_b\gamma_c, \quad \phi_c\lambda_b = \lambda_b\phi_c,\quad \phi_c\gamma_{c'}=\gamma_{c'}\phi_c\end{equation}
\begin{equation}\label{eqn:construction2} \phi_{c'}(\gamma_{cc'}\psi_{cc'}^{-1} - \gamma_{c'}\psi_{c'}^{-1}) = \gamma_c\psi^{-1}_c-\mathrm{id}_B\end{equation}
for all $b\in B$ and $c,c'\in C$, define 
\begin{align*}
    (b,c) \cdot (b',c') & = (b + \phi_c(b'),cc')\\
    ((\phi_c\gamma_c)(b),c)\circ ((\phi_{c'}\gamma_{c'})(b'),c')& = ( (\phi_{cc'}\gamma_{cc'})(\psi^{-1}_{c'}(b)\circ b'),cc')
\end{align*}
for any $b,b'\in B$ and $c,c'\in C$. Then $(B\times C,\cdot,\circ)$ is a skew brace.
\end{theorem}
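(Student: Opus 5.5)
The plan is to verify the three ingredients of a skew brace separately: $(B\times C,\cdot)$ is a group, $(B\times C,\circ)$ is a well-defined group, and the brace relation holds. For the brace relation I will use the standard reformulation. Given two group structures on a set with the same identity, the brace relation is equivalent to the map $y\mapsto x^{-1}\cdot(x\circ y)$ being an endomorphism of the additive group for every $x$. Indeed, the relation $x\circ(y\cdot z)=(x\circ y)\cdot x^{-1}\cdot(x\circ z)$ is exactly $\lambda_x(y\cdot z)=\lambda_x(y)\cdot\lambda_x(z)$.

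First, $(b,c)\cdot(b',c')=(b+\phi_c(b'),cc')$ is the usual semidirect product $(B,+)\rtimes_\phi C$, so it is a group with identity $(0,1)$. Next, the map $\Theta:(b,c)\mapsto((\phi_c\gamma_c)(b),c)$ is a bijection of $B\times C$, since each $\phi_c\gamma_c$ is an automorphism. The definition of $\circ$ then says $\Theta(u)\circ\Theta(v)=\Theta(u*v)$, where
\[(b,c)*(b',c')=(\psi_{c'}^{-1}(b)\circ b',cc').\]
This $*$ is the semidirect product of $(B,\circ)$ and $C$ in the form $(c\circ b)\circ(c'\circ b')=(c\circ c')\circ(\psi_{c'}^{-1}(b)\circ b')$ recalled above. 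It is associative because $\psi$ is a homomorphism into $\Aut(B,\circ)$, which is a routine check. Hence $\circ$ is a well-defined group operation transported by $\Theta$, and its identity is $\Theta(0,1)=(0,1)$.

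For the brace relation, fix $x=((\phi_c\gamma_c)(b),c)$ and $y=((\phi_{c'}\gamma_{c'})(b'),c')$, and write $y_1=(\phi_{c'}\gamma_{c'})(b')$. I will expand $\psi_{c'}^{-1}(b)\circ b'=\psi_{c'}^{-1}(b)+\lambda_{\psi_{c'}^{-1}(b)}(b')$ inside the first coordinate of $x\circ y$. Using $\gamma_{cc'}=\gamma_c\gamma_{c'}$ and the first relation of \eqref{eqn:construction1}, namely $\gamma_{c'}\lambda_{\psi_{c'}^{-1}(b)}=\lambda_b\gamma_{c'}$, the $b'$-part becomes $\phi_{cc'}\gamma_c\lambda_b\gamma_{c'}(b')$. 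The remaining commutation rules in \eqref{eqn:construction1} ($\phi$ commutes with every $\lambda_b$ and every $\gamma_{c'}$) let me move $\phi_{c'}$ past $\gamma_c\lambda_b$, so this term equals $\phi_c\gamma_c\lambda_b(y_1)$. Multiplying on the left by $x^{-1}$ then gives the formula
\[ x^{-1}\cdot(x\circ y)=\bigl(\delta(c')+\gamma_c\lambda_b(y_1),\,c'\bigr),\]
\[ \delta(c')=\phi_c^{-1}\bigl(\phi_{cc'}\gamma_{cc'}\psi_{c'}^{-1}(b)-\phi_c\gamma_c(b)\bigr)=\phi_{c'}\gamma_{cc'}\psi_{c'}^{-1}(b)-\gamma_c(b).\]

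Since $T=\gamma_c\lambda_b$ is additive and commutes with each $\phi_{c'}$, this map is a homomorphism of $(B,+)\rtimes_\phi C$ precisely when $\delta$ is a crossed homomorphism, that is,
\[\delta(c'c'')=\delta(c')+\phi_{c'}\delta(c'')\quad\text{for all } c',c''\in C.\]
I expect verifying this cocycle identity to be the main obstacle, and it is where \eqref{eqn:construction2} must enter. My plan is to substitute \eqref{eqn:construction2} with the pair $(cc',c'')$, and also with the pair $(c',c'')$, to rewrite differences of the form $\phi_{c''}(\gamma_{cc'c''}\psi^{-1}_{cc'c''}-\gamma_{c''}\psi^{-1}_{c''})$. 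I would then subtract the two instances so that the $\gamma_{c''}\psi_{c''}^{-1}$ terms cancel, expressing everything via $\gamma_{cc'}\psi_{cc'}^{-1}$ and $\gamma_{c'}\psi_{c'}^{-1}$. Here it may be cleaner to reparametrize $b$ as $\psi_c(b)$, or to apply $\psi$-inverses, so that the identity compares maps of the form $\gamma_g\psi_g^{-1}$ evaluated at a single fixed element. Finally, $\phi_{c'}$ must be pulled through $\gamma$ using \eqref{eqn:construction1}, and I will check carefully that the bookkeeping matches exactly.
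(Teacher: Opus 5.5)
Your reduction is correct, and it is a different route from the paper's. The paper expands both sides of the brace relation for $a_i=((\phi_{c_i}\gamma_{c_i})(b_i),c_i)$ and matches first coordinates. You instead compute $\lambda_x(y_1,c')=(\delta(c')+\gamma_c\lambda_b(y_1),c')$ and reduce the brace relation to $\delta$ being a crossed homomorphism for $\phi$. Your formula for $x^{-1}\cdot(x\circ y)$ and its equivalence with the cocycle identity both hold.

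\textbf{The gap.} The cocycle identity is left unproved, yet it is the only place where \eqref{eqn:construction2} enters, so it is the heart of the theorem. Moreover, the plan you sketch for it aims at the wrong terms. The instances of \eqref{eqn:construction2} at $(cc',c'')$ and $(c',c'')$ involve only composites $\gamma_g\psi_g^{-1}$ with matching indices, such as $\gamma_{cc'c''}\psi^{-1}_{cc'c''}$. Your $\delta$, by contrast, contains the mismatched composites $\gamma_{cc'}\psi^{-1}_{c'}$ and $\gamma_{cc'c''}\psi^{-1}_{c'c''}$. Subtracting those two instances therefore gives a relation among matched composites that does not bear on $\delta$. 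In fact the $(c',c'')$ instance alone suffices, but only after $\delta$ is rewritten.

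\textbf{The fix.} Pull $\gamma_c$ out instead of trying to match indices. Since $\gamma_{cc'}=\gamma_c\gamma_{c'}$, and $\gamma_c$ is additive and commutes with every $\phi_{c'}$, you get $\delta(c')=\gamma_c(\varepsilon(c'))$ with
\[\varepsilon(c')=\phi_{c'}\gamma_{c'}\psi_{c'}^{-1}(b)-b,\]
which does not depend on $c$. Since $\gamma_c$ is injective, $\delta$ is a crossed homomorphism if and only if $\varepsilon$ is. Apply $\phi_{c'}^{-1}$ to $\varepsilon(c'c'')=\varepsilon(c')+\phi_{c'}\varepsilon(c'')$ and cancel $\phi_{c'}^{-1}(b)$ from both sides. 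What remains is exactly
\[\phi_{c''}\bigl(\gamma_{c'c''}\psi_{c'c''}^{-1}-\gamma_{c''}\psi_{c''}^{-1}\bigr)(b)=\bigl(\gamma_{c'}\psi_{c'}^{-1}-\mathrm{id}_B\bigr)(b),\]
which is \eqref{eqn:construction2} for the pair $(c',c'')$.

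With this line added, your proof is complete, and it is more transparent than the paper's computation. Your formula shows that $\lambda_x$ is $\lambda_{(b,1)}$ followed by $\gamma_c$ on the first coordinate. It also shows that \eqref{eqn:construction2} is equivalent to each $c'\mapsto\varepsilon(c')$ being a $1$-cocycle, i.e.\ to each $\lambda_{(b,1)}$ being an endomorphism of $(B\times C,\cdot)$. The paper's direct expansion needs no such reformulation, but it hides this structure.
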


\begin{proof}
    It is clear that $(B\times C,\cdot)$ is a group. Note that $\circ$ is nothing but the binary operation on $B\times C$ induced by the bijection
    \[ B\times C\rightarrow ((B,\circ^{\mbox{\tiny op}})\rtimes_\psi C)^{\mbox{\tiny op}}\quad (b,c)\mapsto ( (\phi_c\gamma_c)^{-1}(b),c^{-1}) \]
    via transport, where the superscript $^{\mbox{\tiny op}}$ denotes the opposite group, so it is also clear that $(B\times C,\circ)$ is a group. Note that
    \begin{align}\notag
    &( (\phi_c\gamma_c)(b),c)\circ ((\phi_{c'}\gamma_{c'})(b'),c')\\\notag
    &\hspace{1cm}= ( (\phi_{cc'}\gamma_{cc'})(\psi_{c'}^{-1}(b) + \lambda_{\psi_{c'}^{-1}(b)}(b')),cc')\\\label{eqn:circ}
    &\hspace{1cm}= ( (\phi_{cc'}\gamma_c)( (\gamma_{c'}\psi_{c'}^{-1})(b) + (\lambda_b\gamma_{c'})(b')),cc')
    \end{align}
for all $b,b'\in B$ and $c,c'\in C$ by the first equality in \eqref{eqn:construction1}.

Now, to prove the brace relation, let
    \[ a_i =( (\phi_{c_i}\gamma_{c_i})(b_i),c_i),\mbox{ where }b_i\in B,\, c_i\in C,\]
    for $i=1,2,3$. It is obvious that
    \begin{align*}
      \mbox{LHS}&:=a_1\circ (a_2\cdot a_3),\\
      \mbox{RHS}&:=(a_1\circ a_2)\cdot a_1^{-1}\cdot (a_1\circ a_3) \end{align*}
    both have $c_1c_2c_3$ in the second coordinate. Note that
    \begin{align*}
        a_2\cdot a_3 & = ( (\phi_{c_2}\gamma_{c_2})(b_2) + (\phi_{c_2c_3}\gamma_{c_3})(b_3),c_2c_3)\\
        & = ( (\phi_{c_2c_3}\gamma_{c_2c_3})( (\gamma_{c_3}^{-1}\phi_{c_3}^{-1})(b_2) + (\gamma_{c_2c_3}^{-1} \gamma_{c_3})(b_3)),c_2c_3)
    \end{align*}
    by the last equality in \eqref{eqn:construction1}, and so the first coordinate of LHS is 
    \begin{align*}
(\phi_{c_1c_2c_3}\gamma_{c_1})( (\gamma_{c_2c_3}\psi_{c_2c_3}^{-1})(b_1) + (\lambda_{b_1}\gamma_{c_2}\phi_{c_3}^{-1})(b_2) + (\lambda_{b_1}\gamma_{c_3})(b_3))
    \end{align*}
by \eqref{eqn:circ}. Similarly, again using \eqref{eqn:circ}, we compute that
 \begin{align*}
        a_1 \circ a_2 & = ( (\phi_{c_1c_2}\gamma_{c_1})(( \gamma_{c_2}\psi_{c_2}^{-1})(b_1) + (\lambda_{b_1}\gamma_{c_2})(b_2)),c_1c_2),\\
        a_1\circ a_3 & = ((\phi_{c_1c_3}\gamma_{c_1})( (\gamma_{c_3}\psi_{c_3}^{-1})(b_1) + (\lambda_{b_1}\gamma_{c_3})(b_3)),c_1c_3).
    \end{align*}
Since $a_1^{-1} = ( \gamma_{c_1}(-b_1),c_1^{-1})$,     we see that the first coordinate of RHS is 
    \begin{align*}
        &(\phi_{c_1c_2}\gamma_{c_1})( (\gamma_{c_2}\psi_{c_2}^{-1})(b_1) + (\lambda_{b_1}\gamma_{c_2})(b_2) - b_1) \\
        &\hspace{2cm}+ (\phi_{c_1c_2c_3}\gamma_{c_1})((\gamma_{c_3}\psi_{c_3}^{-1})(b_1) + (\lambda_{b_1}\gamma_{c_3})(b_3) ).
    \end{align*}
Using the middle and last equalities in \eqref{eqn:construction1}, and also the assumption that $(B,+)$ is abelian, the above simplifies to
       \begin{align*} 
       &(\phi_{c_1c_2c_3}\gamma_{c_1})( (\phi_{c_3}^{-1}(\gamma_{c_2}\psi_{c_2}^{-1}-\mathrm{id}_B) + \gamma_{c_3}\psi_{c_3}^{-1})(b_1)\\
       &\hspace{3.75cm} + (\lambda_{b_1}\gamma_{c_2}\phi_{c_3}^{-1})(b_2) + (\lambda_{b_1}\gamma_{c_3})(b_3)).
    \end{align*}
This equals the first coordinate of LHS by \eqref{eqn:construction2}, as desired.
\end{proof}

In the skew brace $(B\times C,\cdot,\circ)$  constructed  in  Theorem \ref{thm:construction},  we have that $B\times \{1\}$ and $\{0\}\times C$ are both sub-skew braces with
\[(b,1) \cdot (b',1) = (b+b',1),\quad (b,1)\circ (b',1) = (b\circ b',1)\]
\[(0,c)\cdot (0,c') = (0,cc'),\quad (0,c)\circ (0,c') = (0,cc')\]
for all $b,b'\in B$ and $c,c'\in C$. Clearly, in fact $B\times \{1\}$ is an ideal with 
\begin{align}\label{eqn:bc dot} (0,c) \cdot (\gamma_c(b),1) &=((\phi_c\gamma_c)(b),c)=  (( \phi_c\gamma_c)(b),1)\cdot (0,c)\\\label{eqn:bc circle} (0,c)\circ (b,1) &= ((\phi_c\gamma_c)(b),c)=( \psi_c(b),1) \circ (0,c)\end{align}
for all $b\in B$ and $c\in C$. In particular, we have
\begin{align*}
(B\times C,\cdot) &= (B \times \{1\},\cdot)\rtimes (\{0\}\times C),\\
(B\times C,\circ) &= (B \times \{1\},\circ)\rtimes (\{0\}\times C).
\end{align*}
Observe that for any $f\in\Aut(B\times C,\cdot,\circ)$, because it preserves both $\cdot$ and $\circ$,  there exists $\beta\in\Aut(B)$ such that
\[ f(b,1) = (\beta(b),1) \mbox{ for all }b\in B,\]
provided that $(B\times\{1\},\cdot)$ is a characteristic subgroup of $(B\times C,\cdot)$, or $(B\times\{1\},\circ)$ is a characteristic subgroup of $(B\times C,\circ)$.

\begin{prop}\label{prop:trivial Aut1}Suppose that we are in the setting of Theorem \ref{thm:construction}. Let $f\in \Aut(B\times C,\cdot,\circ)$ and $\beta\in \Aut(B)$ be such that
\[ f(b,1) = (\beta(b),1) \mbox{ for all }b\in B. \]
For any $c\in C$, write $f(0,c) = ( (\phi_{c_0}\gamma_{c_0})(b_0),c_0)$. Then
\[ \beta\phi_c= \phi_{c_0}\beta,\quad\beta \gamma_c = \gamma_{c_0}\lambda_{b_0}\beta,\mbox{ and } \]
\[(\lambda_{b_0}\beta -\psi_{c_0}^{-1}\beta\psi_c )(b)  =(\lambda_{ (\psi_{c_0}^{-1}\beta\psi_{c})(b)} - \mathrm{id}_B)(b_0) \]
has to hold for all $b\in B$. 
\end{prop}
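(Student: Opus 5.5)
Apply $f$ to the two conjugation identities \eqref{eqn:bc dot} and \eqref{eqn:bc circle}, which describe how $\{0\}\times C$ acts on the ideal $B\times\{1\}$ in each operation, and compare first coordinates. Set $a_0:=f(0,c)=((\phi_{c_0}\gamma_{c_0})(b_0),c_0)$. In the notation of Theorem \ref{thm:construction}, $a_0=(0,c_0)\circ(b_0,1)$ by \eqref{eqn:bc circle}. For the additive structure, write $a_0$ as $(x,1)\cdot(0,c_0)$ with $x=(\phi_{c_0}\gamma_{c_0})(b_0)$.

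\emph{Additive identity.} Apply $f$ to $(0,c)\cdot(\gamma_c(b),1)=((\phi_c\gamma_c)(b),1)\cdot(0,c)$, which gives
\[ a_0\cdot(\beta\gamma_c(b),1)=(\beta\phi_c\gamma_c(b),1)\cdot a_0 .\]
By the $\cdot$ rule, the left side has first coordinate $x+\phi_{c_0}\beta\gamma_c(b)$ and the right side has $\beta\phi_c\gamma_c(b)+x$. Since $(B,+)$ is abelian, we get $\phi_{c_0}\beta\gamma_c=\beta\phi_c\gamma_c$, and since $\gamma_c$ is bijective, $\beta\phi_c=\phi_{c_0}\beta$.

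\emph{Multiplicative identity.} Apply $f$ to $(0,c)\circ(b,1)=(\psi_c(b),1)\circ(0,c)$. The left side becomes $a_0\circ(\beta(b),1)$ and the right side becomes $(\beta\psi_c(b),1)\circ a_0$. Compute both sides with \eqref{eqn:circ}, noting that $(b',1)=((\phi_1\gamma_1)(b'),1)$.
\begin{itemize}
\item Left side: first coordinate $(\phi_{c_0}\gamma_{c_0})(b_0+\lambda_{b_0}\beta(b))$.
\item Right side: take the left factor with $c=1$ and $b=\beta\psi_c(b)$. The first coordinate is $(\phi_{c_0}\gamma_1)\big((\gamma_{c_0}\psi_{c_0}^{-1})(\beta\psi_c b)+(\lambda_{\beta\psi_c b}\gamma_{c_0})(b_0)\big)$.
\end{itemize}
Cancel $\phi_{c_0}$ and use the first relation in \eqref{eqn:construction1}, $\lambda_{b'}\gamma_{c_0}=\gamma_{c_0}\lambda_{\psi_{c_0}^{-1}(b')}$. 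After applying $\gamma_{c_0}^{-1}$, the equation becomes
\[ b_0+\lambda_{b_0}\beta(b)=\psi_{c_0}^{-1}\beta\psi_c(b)+\lambda_{\psi_{c_0}^{-1}\beta\psi_c(b)}(b_0).\]
Rearranging in the abelian group $(B,+)$ gives exactly the third displayed identity.

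\emph{The $\gamma$ relation.} Still to show is $\beta\gamma_c=\gamma_{c_0}\lambda_{b_0}\beta$. This uses the \emph{mixed} description of $(0,c)\circ(b,1)$ as $((\phi_c\gamma_c)(b),1)\cdot(0,c)$, i.e.\ the equality of the circle product with the dot product in \eqref{eqn:bc dot}/\eqref{eqn:bc circle}. Applying $f$, the left side is again $a_0\circ(\beta b,1)$, with first coordinate $\phi_{c_0}\gamma_{c_0}(b_0+\lambda_{b_0}\beta b)$. The right side is $(\beta\phi_c\gamma_c b,1)\cdot a_0$, with first coordinate $\beta\phi_c\gamma_c(b)+\phi_{c_0}\gamma_{c_0}(b_0)$. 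Subtracting the common term $\phi_{c_0}\gamma_{c_0}(b_0)$ gives $\phi_{c_0}\gamma_{c_0}\lambda_{b_0}\beta=\beta\phi_c\gamma_c=\phi_{c_0}\beta\gamma_c$, using the first identity. Cancelling $\phi_{c_0}$ yields the second identity.

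\emph{Main obstacle.} The only delicate point is the bookkeeping in \eqref{eqn:circ}: one must correctly identify the parameters $(b,c)$ of each factor, in particular reading $(b',1)$ with $c=1$ and $\gamma_1=\phi_1=\mathrm{id}$. One must also apply the first relation of \eqref{eqn:construction1} in the right direction to move $\gamma_{c_0}$ past $\lambda$.
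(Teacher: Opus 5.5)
Your proposal is correct and follows the paper's proof: applying $f$ to \eqref{eqn:bc dot} and \eqref{eqn:bc circle} gives the four expressions \eqref{eqn1}--\eqref{eqn4} for $f((\phi_c\gamma_c)(b),c)$, and comparing them pairwise yields the three identities. The differences are cosmetic. You get the $\gamma$-relation from \eqref{eqn1} versus \eqref{eqn4} together with the first identity, whereas the paper compares \eqref{eqn2} with \eqref{eqn4} directly; and you compute \eqref{eqn3} via \eqref{eqn:circ} and then undo the first relation of \eqref{eqn:construction1}, whereas the paper reads it off the definition of $\circ$.
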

\begin{proof}Let $b\in B$ and $c\in C$ be arbitrary.  Since $f$ preserves both of the operations $\cdot$ and $\circ$, we see from \eqref{eqn:bc dot} and \eqref{eqn:bc circle} that
\begin{align*}
f((\phi_c\gamma_c)(b),c)
&  = ((  \beta\phi_c\gamma_c)(b),1)\cdot  ((\phi_{c_0}\gamma_{c_0})(b_0),c_0)\\[2pt]
& = ((\phi_{c_0}\gamma_{c_0})(b_0),c_0) \cdot ((\beta\gamma_c)(b),1)\\[2pt]
& = ( (\beta \psi_c)(b),1) \circ ((\phi_{c_0}\gamma_{c_0})(b_0),c_0)\\[2pt]
& = ((\phi_{c_0}\gamma_{c_0})(b_0),c_0)\circ (\beta(b),1).
\end{align*}
It is straightforward to compute that then
\begin{align}\label{eqn1}
f((\phi_c\gamma_c)(b),c)
&  = ((  \beta\phi_c\gamma_c)(b)+(\phi_{c_0}\gamma_{c_0})(b_0),c_0)\\[2pt]\label{eqn2}
& = ((\phi_{c_0}\gamma_{c_0})(b_0) + (\phi_{c_0}\beta\gamma_c)(b),c_0)\\[2pt]\label{eqn3}
& = ( (\phi_{c_0}\gamma_{c_0})( (\psi_{c_0}^{-1}\beta\psi_c) (b) \circ b_0),c_0)\\[2pt]\label{eqn4}
& = ((\phi_{c_0}\gamma_{c_0})(b_0\circ \beta(b)),c_0).
\end{align}
By comparing \eqref{eqn1} and \eqref{eqn2}, we  immediately obtain the first desired equality. Similarly, by comparing \eqref{eqn2} and \eqref{eqn4}, we see that
\[ (\phi_{c_0}\gamma_{c_0})(b_0)+  (\phi_{c_0}\beta \gamma_c)(b) = (\phi_{c_0}\gamma_{c_0})(b_0 + (\lambda_{b_0}\beta)(b)),\]
which implies the second desired equality. Finally, by comparing \eqref{eqn3} and \eqref{eqn4}, we get that
\[ (\psi_{c_0}^{-1}\beta\psi_c)(b) + \lambda_{(\psi_{c_0}^{-1}\beta\psi_c)(b)}(b_0) = b_0 + (\lambda_{b_0}\beta)(b),\]
which clearly simplifies to the last desired equality.
 \end{proof}
 
\section{An explicit description of the skew brace of order $24$ with a trivial automorphism group}\label{sec:24}

 We shall apply Theorem \ref{thm:construction} to construct the unique skew brace
 \[ \texttt{SmallSkewbrace(24,855)}\]
of order $24$ that has a trivial automorphism group. We take
\[ B = (\mathbb{F}_2^2\times \mathbb{F}_3,+,\circ),\quad C = (\mathbb{F}_2,+),\]
where $+$ denotes the usual addition, and $\circ$ is defined by
\[  \left(\begin{pmatrix}
x_1\\x_2
\end{pmatrix}, z\right)\circ \left( \begin{pmatrix}y_1\\y_2\end{pmatrix},z'\right)= \left(\begin{pmatrix}
x_1\\x_2
\end{pmatrix}
+ \begin{pmatrix} 1 & 1 \\ 1 & 0 \end{pmatrix}^{z} \begin{pmatrix}y_1\\y_2\end{pmatrix},z+z'\right)\]
for any $(x_1,x_2)^T,(y_1,y_2)^T\in \mathbb{F}_2^2$ and $z,z'\in \mathbb{F}_3$. It is easy to verify that 
$B$ is a skew brace (see \cite[Example 1.4]{skew brace}). We shall identify
\[ \Aut(B,+) = \Aut(\mathbb{F}_2^2\times\mathbb{F}_3) = \mathrm{GL}_2(\mathbb{F}_2) \times \mathbb{F}_3^\times\]
in the obvious way. Note that then
\[ \lambda_{(\vec{x},z)} = \left( \begin{pmatrix} 1 &1 \\ 1 & 0 \end{pmatrix}^z,1\right)\]
for any $\vec{x}\in \mathbb{F}_2^2$ and $z\in \mathbb{F}_3$. Let us first compute $\Aut(B)$.

\begin{lem}\label{lem:Aut(B)0} The automorphism group of $B$ is given by
\begin{align*}
 \Aut(B) &  = \left\{ (M,1) : M \in \left\langle\left(\begin{smallmatrix}1 & 1 \\ 1 & 0 \end{smallmatrix}\right) \right\rangle\right\}\\
 & \hspace{2cm}\cup  \left\{ (M,-1) : M \not\in \left\langle\left(\begin{smallmatrix}1 & 1 \\ 1 & 0 \end{smallmatrix} \right)\right\rangle\right\}.\end{align*}
\end{lem}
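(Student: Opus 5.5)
The plan is to compute $\Aut(B)=\Aut(B,+)\cap\Aut(B,\circ)$ by applying Lemma \ref{lem:circle hom} to each $f=(M,\epsilon)\in \mathrm{GL}_2(\mathbb{F}_2)\times\mathbb{F}_3^\times=\Aut(B,+)$. Write $T=\left(\begin{smallmatrix}1&1\\1&0\end{smallmatrix}\right)$, which has order $3$ in $\mathrm{GL}_2(\mathbb{F}_2)\cong S_3$. Lemma \ref{lem:circle hom} says that $f\in\Aut(B,\circ)$ if and only if $\lambda_{f(a)}=f\lambda_af^{-1}$ for all $a\in B$.

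First compute both sides for $a=(\vec x,z)$. Since $f(a)=(M\vec x,\epsilon z)$, the left side is $\lambda_{f(a)}=(T^{\epsilon z},1)$. The right side is $(M,\epsilon)(T^z,1)(M,\epsilon)^{-1}=(MT^zM^{-1},1)$, because the $\mathbb{F}_3^\times$-component commutes with everything. It suffices to take $z=1$, since both sides are then determined by multiplicativity in $z$. The condition therefore reduces to the single matrix equation
\[ MTM^{-1}=T^{\epsilon}. \]

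Next, solve this equation inside $S_3$. The subgroup $\langle T\rangle\cong A_3$ is normal, and conjugation by $M$ fixes $T$ exactly when $M\in\langle T\rangle$. This uses that the centralizer of a $3$-cycle in $S_3$ is $A_3$. If $M\notin\langle T\rangle$, then $M$ is a transposition and inverts $T$, so $MTM^{-1}=T^{-1}=T^2$.

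Matching with $\epsilon$ gives the result. When $\epsilon=1$ we need $MTM^{-1}=T$, i.e.\ $M\in\langle T\rangle$. When $\epsilon=-1$ we need $MTM^{-1}=T^{-1}$, i.e.\ $M\notin\langle T\rangle$, using $T^{-1}\neq T$. This yields exactly the two sets in the statement.

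There is no real obstacle. The only points to verify are that $\lambda_{(\vec x,z)}$ is as stated, which the paper already records, and the elementary facts about conjugation in $S_3$, which can be checked directly; for instance, the transposition $\left(\begin{smallmatrix}0&1\\1&0\end{smallmatrix}\right)$ conjugates $T$ to $T^2$.
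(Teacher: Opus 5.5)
Your proposal is correct and follows essentially the same route as the paper: you apply Lemma \ref{lem:circle hom} to reduce the condition to $(T^{\epsilon z},1)=(MT^zM^{-1},1)$, then use that $\langle T\rangle$ is its own centralizer in $\mathrm{GL}_2(\mathbb{F}_2)\cong S_3$ while every $M\notin\langle T\rangle$ inverts $T$. Your explicit reduction to $z=1$ is a minor tidying of the paper's ``for all $z\in\mathbb{F}_3$'' step.
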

\begin{proof}
It follows from Lemma \ref{lem:circle hom} that $\Aut(B)$ consists exactly of the pairs $(M,u)\in \Aut(B,+)$ such that 
\[  \lambda_{(M,u)(\vec{x},z)}   = (M,u) \lambda_{(\vec{x},z)} (M,u)^{-1}\]
for all $\vec{x}\in \mathbb{F}_2^2$ and $z\in\mathbb{F}_3$. We may rewrite the above as
\[ \left( \begin{pmatrix}1 & 1 \\ 1 & 0 \end{pmatrix}^{uz} ,1 \right) = \left( M\begin{pmatrix}1 & 1 \\ 1 & 0 \end{pmatrix}^{z}M^{-1},1\right).\]
Since $M\left( \begin{smallmatrix} 1 & 1 \\ 1 & 0 \end{smallmatrix}\right)M^{-1}=\left( \begin{smallmatrix} 1 & 1 \\ 1 & 0 \end{smallmatrix}\right)^{-1}$ when $M\not\in \left\langle\left( \begin{smallmatrix} 1 & 1 \\ 1 & 0 \end{smallmatrix}\right)\right\rangle$, we see that the above equality holds for all $z\in \mathbb{F}_3$ if and only if
\[ u =1\mbox{ and }M \in \langle \left( \begin{smallmatrix} 1 & 1 \\ 1 & 0 \end{smallmatrix}\right)\rangle,\quad \mbox{or}\quad 
\mbox{$u=-1$ and $M\not\in \langle \left( \begin{smallmatrix} 1 & 1 \\ 1 & 0 \end{smallmatrix}\right)\rangle$}.\]
This completes the proof.
\end{proof}

Now, consider the homomorphisms
\begin{align*}
    \phi : C\rightarrow \Aut(B,+);&\,\ c\mapsto \phi_c := \left(\left(\begin{smallmatrix}
    1 & 0 \\0 & 1
    \end{smallmatrix}\right),1\right),\\
    \gamma : C\rightarrow\Aut(B,+);&\,\ c\mapsto\gamma_c:= \left(\left(\begin{smallmatrix}
    0 & 1 \\ 1 & 0
    \end{smallmatrix}\right),-1\right)^c,\\
    \psi : C\rightarrow\Aut(B,\circ);&\,\ c\mapsto \psi_c:= \left(\left(\begin{smallmatrix}
    1 & 1 \\ 0 & 1
    \end{smallmatrix}\right),-1\right)^c.
\end{align*}
It is clear that $\gamma_1$ and $\psi_1$ have order $2$, and $\psi_1 \in \Aut(B,\circ)$ by Lemma \ref{lem:Aut(B)0}. We check that they satisfy the conditions in Theorem \ref{thm:construction}.

\begin{lem}\label{lem:conditions0}The homomorphisms $\phi,\gamma,\psi$ satisfy
\[\gamma_c\lambda_{\psi_c^{-1}(\vec{x},z)} = \lambda_{(\vec{x},z)}\gamma_c, \quad \phi_c\lambda_{(\vec{x},z)} = \lambda_{(\vec{x},z)}\phi_c,\quad \phi_c\gamma_{c'}=\gamma_{c'}\phi_c\]
\[\phi_{c'}(\gamma_{c+c'}\psi_{c+c'}^{-1} - \gamma_{c'}\psi_{c'}^{-1}) = \gamma_c\psi^{-1}_c-\mathrm{id}_B\]
for all $\vec{x}\in \mathbb{F}_2^2$, $z\in \mathbb{F}_3$, and $c,c'\in \mathbb{F}_2$. 
\end{lem}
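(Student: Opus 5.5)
The plan is to verify each identity directly in $\Aut(B,+)=\mathrm{GL}_2(\mathbb{F}_2)\times\mathbb{F}_3^\times$. Write $T=\left(\begin{smallmatrix}1&1\\1&0\end{smallmatrix}\right)$, $S=\left(\begin{smallmatrix}0&1\\1&0\end{smallmatrix}\right)$ and $U=\left(\begin{smallmatrix}1&1\\0&1\end{smallmatrix}\right)$. Then $\gamma_1=(S,-1)$ and $\psi_1=(U,-1)$. Since $C=\mathbb{F}_2$, it suffices to check the cases $c,c'\in\{0,1\}$, and $\gamma_0,\psi_0,\phi_c$ are all the identity. Differences such as $\gamma_c\psi_c^{-1}-\mathrm{id}_B$ are taken in $\mathrm{End}(B,+)$, and they act componentwise on $\mathbb{F}_2^2$ and on $\mathbb{F}_3$.

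First, because $\phi$ is trivial, the middle and last identities of the first line hold immediately. For the first identity, the case $c=0$ is trivial, so take $c=1$. Since $\psi_1^{-1}$ acts on the $\mathbb{F}_3$-coordinate by $-1$, we have
\[ \lambda_{\psi_1^{-1}(\vec{x},z)}=(T^{-z},1). \]
Hence the left side equals $(ST^{-z},-1)$ and the right side equals $(T^{z}S,-1)$. So the identity reduces to $ST^{-z}S^{-1}=T^{z}$. This holds because $S\notin\langle T\rangle$, and, as observed in the proof of Lemma~\ref{lem:Aut(B)0}, every $M\notin\langle T\rangle$ satisfies $MTM^{-1}=T^{-1}$. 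Alternatively, one can check $STS=T^2$ by a direct $2\times 2$ computation.

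For the identity in the second line, with $\phi$ trivial it reads
\[ \gamma_{c+c'}\psi_{c+c'}^{-1}-\gamma_{c'}\psi_{c'}^{-1}=\gamma_c\psi_c^{-1}-\mathrm{id}_B. \]
When $c=0$, both sides vanish. When $c'=0$, both sides equal $\gamma_c\psi_c^{-1}-\mathrm{id}_B$. The only remaining case is $c=c'=1$, where the identity becomes $2(\gamma_1\psi_1^{-1}-\mathrm{id}_B)=0$.

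The only point needing care is this last case, and it is settled by computing $\gamma_1\psi_1^{-1}=(SU^{-1},1)$. On the $\mathbb{F}_2^2$ component, twice any endomorphism is zero. On the $\mathbb{F}_3$ component, $\gamma_1\psi_1^{-1}-\mathrm{id}$ acts as $1-1=0$, because the two signs $-1$ cancel. Hence $2(\gamma_1\psi_1^{-1}-\mathrm{id}_B)=0$, which completes the verification.
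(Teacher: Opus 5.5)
Your proposal is correct and follows essentially the same route as the paper. Both reduce to $c=c'=1$ and use $\phi_1=\mathrm{id}_B$ to dispose of the middle and last identities of the first line. Both verify the first identity via $ST^{-z}S^{-1}=T^{z}$. Both settle the last identity by observing that $\mathrm{id}_B-\gamma_1\psi_1^{-1}=\gamma_1\psi_1^{-1}-\mathrm{id}_B$, since the $\mathbb{F}_2^2$-part lives in characteristic $2$ and the two $\mathbb{F}_3$-signs cancel.
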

\begin{proof}
All of the equalities are trivial when $c=0$ or $c'=0$. Hence, it suffices to verify them when $c=c'=1$. Since $\phi_1=\mathrm{id}_B$ by choice, the second and third equalities are trivial. Note that
\begin{align*}
\phi_1(\mathrm{id}_B - \gamma_1\psi_1^{-1}) & = \left(\left(\begin{smallmatrix}1 & 0 \\ 0 & 1  \end{smallmatrix} \right),1\right)-\left(\left(\begin{smallmatrix}0 & 1 \\ 1 & 1  \end{smallmatrix} \right) ,1\right)\\
&=\left(\left(\begin{smallmatrix}0 & 1 \\ 1 & 1  \end{smallmatrix} \right) ,1\right)-\left(\left(\begin{smallmatrix}1 & 0 \\ 0 & 1  \end{smallmatrix} \right),1\right)\\
& =\gamma_1\psi_1^{-1}-\mathrm{id}_B,
\end{align*}
so the last equality holds. Finally, we compute that
\begin{align*}
\gamma_1 \lambda_{\psi_1^{-1}(\vec{x},z)}\gamma_1^{-1} & = \left(\left(\begin{smallmatrix} 0 & 1 \\ 1 & 0\end{smallmatrix}\right), -1 \right)\left(\left(\begin{smallmatrix} 1& 1 \\ 1 & 0\end{smallmatrix}\right)^{-z}, 1 \right)\left(\left(\begin{smallmatrix} 0 & 1 \\ 1 & 0\end{smallmatrix}\right)^{-1}, -1 \right)\\
& = \left(\left(\begin{smallmatrix} 1 & 1 \\ 1 & 0\end{smallmatrix}\right)^{z}, 1 \right) \\
& = \lambda_{(\vec{x},z)},
\end{align*}
so the first equality is satisfied. This proves the claim.
\end{proof}

From Lemma \ref{lem:conditions0} and Theorem \ref{thm:construction}, we obtain a skew brace
\[ A= (B\times C,\cdot ,\circ) = ( (\mathbb{F}_2^2\times\mathbb{F}_3)\times \mathbb{F}_2,\cdot,\circ)\]
by defining the two operations by
\begin{align}\label{eqn:+def}
    ( (\vec{x},z),c) \cdot ((\vec{x}',z'),c') & =  ( (\vec{x}+\vec{x}', z+z') ,c+c')\\\notag
   (\gamma_c(\vec{x},z),c)\circ (\gamma_{c'}(\vec{x}',z'),c') & = (\gamma_{c+c'}( \psi_{c'}^{-1}(\vec{x},z) \circ (\vec{x}',z')) , c+ c')
\end{align}
for all $\vec{x},\vec{x}'\in \mathbb{F}_2^2$, $z,z'\in\mathbb{F}_3$, and $c,c'\in\mathbb{F}_2$. 

We prove that $\Aut(A)$ is trivial. Observe that
\begin{align*} (B,\circ) &= (\mathbb{F}_2^2,+) \rtimes (\mathbb{F}_3,+) \simeq A_4, \\
(B\times C,\circ) & = (B,\circ)\rtimes (\mathbb{F}_2,+) \simeq S_4,
\end{align*}
so we see that $(B\times \{0\},\circ)$ is a characteristic subgroup of $(B\times C,\circ)$. Thus, for any $f\in \Aut(A)$, there exists $\beta\in \Aut(B)$ such that
\[ f((\vec{x},z),0) = (\beta(\vec{x},z),0) \mbox{ for all }(\vec{x},z)\in \mathbb{F}_2^2\times \mathbb{F}_3.\]
By Lemma \ref{lem:Aut(B)0}, we may write
\[ \beta = (M,u),\mbox{ where } \begin{cases}
M \in \langle \left( \begin{smallmatrix}1 & 1 \\ 1 & 0 \end{smallmatrix}\right)\rangle &\mbox{for }u=1,\\
M \not\in \langle \left( \begin{smallmatrix}1 & 1 \\ 1 & 0 \end{smallmatrix}\right)\rangle &\mbox{for }u=-1.
\end{cases}\]
Since $f( (\vec{0},0),1)$ has order two in $(A,\cdot)$, it is clear from \eqref{eqn:+def} that
\[ f( (\vec{0},0),1) = (\gamma_1(\vec{y},0),1) = ( (\left(\begin{smallmatrix}y_2\\y_1\end{smallmatrix} \right),0),1)\]
for some $\vec{y}=(y_1,y_2)^T\in \mathbb{F}_2^2$. We need to show that $\beta = \mathrm{id}_B$ and $\vec{y}=\vec{0}$.

 Note that  $\lambda_{(\vec{y},0)} = \mathrm{id}_B$, so we deduce from Proposition \ref{prop:trivial Aut1}  that
\[\beta \gamma_1 = \gamma_{1}\beta, \mbox{ and }(\beta -\psi_{1}^{-1}\beta\psi_1 )(\vec{x},z)  =(\lambda_{ (\psi_{1}^{-1}\beta\psi_{1})(\vec{x},z)} - \mathrm{id}_B)( \vec{y},0) \]
has to hold for all $\vec{x} \in\mathbb{F}_2^2$ and $z\in \mathbb{F}_3$. The matrix $M$ has to commute with $\left(\begin{smallmatrix} 0 & 1 \\ 1 & 0 \end{smallmatrix}\right)$ by the first equality, and also with $\left(\begin{smallmatrix} 1 & 1 \\ 0 & 1 \end{smallmatrix}\right)$ by taking $z=0$ in the second equality. This yields $M = \left(\begin{smallmatrix} 1 & 0 \\ 0 & 1 \end{smallmatrix}\right)$ and $\beta = (\left(\begin{smallmatrix} 1 & 0 \\ 0 & 1 \end{smallmatrix}\right),1)=\mathrm{id}_B$. By taking $z=1$ in the second equality, we in turn see that
\[ \left( \left(\begin{smallmatrix} 1 & 1 \\ 1 & 0 \end{smallmatrix}\right) - \left(\begin{smallmatrix} 1 & 0 \\ 0 & 1 \end{smallmatrix}\right)\right)\vec{y} = \vec{0}\]
has to hold, so then $\vec{y}=\vec{0}$. We have thus shown that $f$ is the identity on both $B$ and $C$, whence $ f = \mathrm{id}_A$.
 
\section{Proof of Theorem \ref{thm'}}

Let $p$ be an odd prime. We shall apply Theorem \ref{thm:construction} to 
\[  B = (\mathbb{F}_p^3,+,\circ),\quad C = (\mathbb{F}_2,+),\]
where $+$ denotes the usual addition, and $\circ$ is defined by
\[ \begin{pmatrix}
    x_1 \\ x_2 \\ x_3 
\end{pmatrix}\circ \begin{pmatrix} y_1 \\ y_2\\ y_3\end{pmatrix} = \begin{pmatrix}
    x_1 + y_1 + x_3y_2\\ x_2 +y_2 \\ x_3 +y_3\end{pmatrix} \]
for any $(x_1,x_2,x_3)^T,(y_1,y_2,y_3)^T\in \mathbb{F}_p^3$ (see \cite[Theorem 3.2(3), Socle of order $p^2$]{bachiller}). We shall identify
\[ \Aut(B,+) = \Aut(\mathbb{F}_p^3) = \mathrm{GL}_3(\mathbb{F}_p)\]
in the obvious way. Note that then
\[ \lambda_{\vec{x}} = \begin{pmatrix} 1 & x_3 & 0 \\ 0 & 1 & 0 \\ 0 & 0 & 1 \end{pmatrix}\]
for any $\vec{x}=(x_1,x_2,x_3)^T\in \mathbb{F}_p^3$. Let us first compute $\Aut(B)$.

\begin{lem}\label{lem:Aut(B)} The automorphism group of $B$ is given by
\[ \Aut(B) = \left\{ \begin{pmatrix}
   uv & s & t \\
   0& u & 0\\
   0 & 0 & v
\end{pmatrix}: u,v\in \mathbb{F}_p^\times,\, s,t\in \mathbb{F}_p\right\}.\]
\end{lem}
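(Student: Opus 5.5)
The plan is to apply Lemma \ref{lem:circle hom}. An element $M\in\mathrm{GL}_3(\mathbb{F}_p)=\Aut(B,+)$ lies in $\Aut(B)$ if and only if
\[ \lambda_{M\vec{x}} = M\lambda_{\vec{x}}M^{-1}\quad\mbox{for all }\vec{x}\in\mathbb{F}_p^3.\]
Write $E_{12}$ for the matrix unit with a $1$ in position $(1,2)$ and zeros elsewhere. Then $\lambda_{\vec{x}} = I + x_3E_{12}$, and the condition becomes
\[ (M\vec{x})_3\, E_{12} = x_3\, ME_{12}M^{-1}\quad\mbox{for all }\vec{x}\in\mathbb{F}_p^3.\]
So the whole computation reduces to linear algebra on the entries $m_{ij}$ of $M$.

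First I constrain the third row of $M$. Take $\vec{x}$ with $x_3=0$, that is, $\vec{x}\in\mathrm{span}(e_1,e_2)$. The right-hand side vanishes, so $(M\vec{x})_3=0$, which forces $m_{31}=m_{32}=0$. Write $v:=m_{33}$; invertibility of $M$ gives $v\neq 0$.

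Next I take $\vec{x}=e_3$. The condition becomes $vE_{12}=ME_{12}M^{-1}$, equivalently $ME_{12}=vE_{12}M$. The matrix $ME_{12}$ has second column $(m_{11},m_{21},m_{31})^T$ and zeros elsewhere. The matrix $vE_{12}M$ has first row $v(m_{21},m_{22},m_{23})$ and zeros elsewhere. Comparing entries gives
\[ m_{21}=m_{31}=0,\qquad m_{21}=m_{23}=0,\qquad m_{11}=vm_{22}. \]
Setting $u:=m_{22}$, $s:=m_{12}$ and $t:=m_{13}$, we get that $M$ has the stated upper triangular shape with diagonal $(uv,u,v)$. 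Invertibility gives $u,v\in\mathbb{F}_p^\times$.

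Conversely, for $M$ of this shape I check the condition for general $\vec{x}$. We have $(M\vec{x})_3=vx_3$, and the computation above already shows $ME_{12}M^{-1}=vE_{12}$. So both sides equal $vx_3E_{12}$, and every such $M$ lies in $\Aut(B)$.

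There is no real obstacle here. The only point needing care is getting the matrix-unit products $ME_{12}$ and $E_{12}M$ right, and confirming that the condition is linear in $x_3$, so that testing $\vec{x}\in\mathrm{span}(e_1,e_2)$ and $\vec{x}=e_3$ suffices.
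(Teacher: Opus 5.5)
Your proof is correct and takes essentially the paper's route: both apply Lemma \ref{lem:circle hom} and compare entries of $M\lambda_{\vec{x}}$ with $\lambda_{M\vec{x}}M$. Writing $\lambda_{\vec{x}}=I+x_3E_{12}$ and testing on $\mathrm{span}(e_1,e_2)$ and on $e_3$ is a tidier way of organizing the same entrywise comparison, and your converse check covers sufficiency.
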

\begin{proof} It follows from Lemma \ref{lem:circle hom} that $\Aut(B)$ consists exactly of the matrices $M\in \Aut(B,+)$ such that
\[ M\lambda_{\vec{x}} = \lambda_{M\vec{x}}M\]
 for all $\vec{x}\in \mathbb{F}_p^3$. Put $ \vec{x} = (x_1,x_2,x_3)^T$ and $M\vec{x} = (x_1',x_2',x_3')^T$. Writing $M=(m_{ij})$ for the entries of $M$, we compute that
\begin{align*}
M\lambda_{\vec{x}} &= \begin{pmatrix}
    m_{11} & m_{12}+m_{11}x_3 & m_{13}\\
    m_{21} & m_{22}+m_{21}x_3 & m_{23}\\
    m_{31} & m_{32}+m_{31}x_3 & m_{33}
\end{pmatrix},\\[6pt]
\lambda_{M\vec{x}}M &= \begin{pmatrix}
    m_{11} + m_{21}x_3' & m_{12} + m_{22}x_3' & m_{13}+m_{23}x_3'\\
    m_{21}&m_{22}&m_{23}\\
    m_{31} &m_{32}&m_{33}
\end{pmatrix}.
\end{align*}
For them to be equal for all $\vec{x}\in\mathbb{F}_p^3$, we need $m_{21}=m_{23}=m_{31}=0$, in which case $x_3' = m_{32}x_2+m_{33}x_3$ and we have
\begin{align*}
M\lambda_{\vec{x}} &= \begin{pmatrix}
    m_{11} & m_{12}+m_{11}x_3 & m_{13}\\
    0 & m_{22} & 0\\
    0 & m_{32} & m_{33}
\end{pmatrix},\\
\lambda_{M\vec{x}}M &= \begin{pmatrix}
    m_{11} & m_{12} + m_{22}(m_{32}x_2 + m_{33}x_3) & m_{13}\\
    0&m_{22}&0\\
    0 &m_{32}&m_{33}
\end{pmatrix}.
\end{align*}
Since $m_{22}\neq 0$ by the invertibility of $M$, we now deduce that
\begin{align*}
& M \lambda_{\vec{x}} = \lambda_{M\vec{x}}M \mbox{ for all }\vec{x}\in \mathbb{F}_p^3 \\
& \hspace{1cm}\iff m_{21}=m_{23}=m_{31}=m_{32} =0,\, m_{11}=m_{22}m_{33}.\end{align*}
This completes the proof.
\end{proof}

Now, let us fix the parameters
\[\epsilon \in \{-1,1\}, \quad \delta_2\in\mathbb{F}_p^\times,\quad \delta_1,\delta_3,\delta_4\in\mathbb{F}_p\]
satisfying the non-equality
\begin{equation}\label{eqn:delta}4(\delta_3-\epsilon \delta_4) + \delta_1\delta_2(1-\epsilon)\neq 0.\end{equation}
Consider the homomorphisms
\begin{align*}
    \phi : C\rightarrow \Aut(B,+);&\,\ c\mapsto \phi_c := \left(\begin{smallmatrix}
        -1 & \frac{1}{2}\delta_1\delta_2 & \delta_1 \\
        0 & -1 & 0 \\
        0 & \delta_2 & 1 
    \end{smallmatrix}\right)^c,\\[6pt]
    \gamma : C\rightarrow\Aut(B,+);&\,\ c\mapsto\gamma_c:=\left(\begin{smallmatrix}
        \epsilon & \delta_3 & -\frac{1}{2}(\epsilon+1)\delta_1\\
        0 & -\epsilon & 0\\
        0 & \frac{1}{2}(\epsilon-1)\delta_2 & -1
            \end{smallmatrix}\right)^c,\\[6pt]
    \psi : C\rightarrow\Aut(B,\circ);&\,\ c\mapsto \psi_c:=\left(\begin{smallmatrix}
        1 & \delta_4 & -\frac{1}{2}\delta_1 \\
        0 & -1 & 0 \\
        0 & 0 & -1
    \end{smallmatrix}\right)^c.
\end{align*}
It is straightforward to check that $\phi_1,\gamma_1$, and $\psi_1$ all have order $2$, and note that $\psi_1 \in \Aut(B,\circ)$ by Lemma \ref{lem:Aut(B)}. Let us show that they indeed satisfy the conditions in Theorem \ref{thm:construction}.

\begin{lem}\label{lem:conditions}The homomorphisms $\phi,\gamma,\psi$ satisfy
\[\gamma_c\lambda_{\psi_c^{-1}(\vec{x})} = \lambda_{\vec{x}}\gamma_c, \quad \phi_c\lambda_{\vec{x}} = \lambda_{\vec{x}}\phi_c,\quad \phi_c\gamma_{c'}=\gamma_{c'}\phi_c\]
\[\phi_{c'}(\gamma_{c+c'}\psi_{c+c'}^{-1} - \gamma_{c'}\psi_{c'}^{-1}) = \gamma_c\psi^{-1}_c-\mathrm{id}_B\]
for all $\vec{x}\in \mathbb{F}_p^3$ and $c,c'\in \mathbb{F}_2$. 
\end{lem}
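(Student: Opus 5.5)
As in the proof of Lemma \ref{lem:conditions0}, all four equalities are trivial when $c=0$ or $c'=0$: $\phi_0,\gamma_0,\psi_0$ are all the identity, and both sides of the last equality collapse to the same thing (for $c'=0$ both sides are $\gamma_c\psi_c^{-1}-\mathrm{id}_B$, and for $c=0$ both sides are $0$). So it suffices to treat $c=c'=1$. Since $\psi_1$ has order $2$, we have $\psi_1^{-1}=\psi_1$, and since $\gamma_1\psi_1^{-1}$ composed with itself need not be the identity, the last equality reads
\[ \phi_1(\mathrm{id}_B-\gamma_1\psi_1)=\gamma_1\psi_1-\mathrm{id}_B,\quad\mbox{i.e.}\quad (\phi_1+\mathrm{id}_B)(\gamma_1\psi_1-\mathrm{id}_B)=0. \]
I will verify each equality by an explicit $3\times 3$ matrix computation, writing $\Lambda_{x}$ for the matrix of $\lambda_{\vec x}$, which is the identity plus $x_3E_{12}$.

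For the second equality, $\phi_1$ commutes with $\lambda_{\vec x}$ for all $\vec x$ iff $\phi_1$ commutes with $E_{12}$. This amounts to column $1$ of $\phi_1$ being $m_{11}e_1$ and row $2$ being $m_{22}e_2^T$ with $m_{11}=m_{22}$. Both hold, since the relevant entries are $-1,0,0$ and $0,-1,0$.

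For the first equality, note that $\psi_1^{-1}(\vec x)$ has third coordinate $-x_3$. Hence it becomes $\gamma_1(I-x_3E_{12})=(I+x_3E_{12})\gamma_1$, i.e.\ $-\gamma_1E_{12}=E_{12}\gamma_1$. Now $\gamma_1E_{12}$ has column $2$ equal to column $1$ of $\gamma_1$, namely $(\epsilon,0,0)^T$. Also $E_{12}\gamma_1$ has row $1$ equal to row $2$ of $\gamma_1$, namely $(0,-\epsilon,0)$. So both sides are the matrix $-\epsilon E_{12}$, and the equality holds.

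For the third equality, I multiply $\phi_1\gamma_1$ and $\gamma_1\phi_1$ entrywise. Both matrices are upper triangular except in the $(3,2)$ slot, so only the entries $(1,2),(1,3),(3,2)$ need checking. The diagonal entries match trivially, and the $(3,2)$ entries are $\delta_2(\epsilon-1)/2\cdot$stuff compared with its counterpart. The fractions $\frac12(\epsilon\pm1)$ were chosen precisely so these cancel. I expect this, together with the last equality, to be the main bookkeeping.

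For the last equality, I compute $N=\gamma_1\psi_1-I$ explicitly. Then I check that $\phi_1+I$, which is a matrix with first two diagonal entries $0$ and with $(3,3)$ entry $2$, annihilates it. Concretely, $\phi_1+I$ has rows $(0,\tfrac12\delta_1\delta_2,\delta_1)$, $0$, and $(0,\delta_2,2)$. So the condition is that rows $2,3$ of $N$ satisfy
\[ \delta_2 N_{2\ast}+2N_{3\ast}=0, \]
and this relation also implies that row $1$ of $(\phi_1+I)N$ vanishes. I compute $N_{2\ast}=(0,\epsilon-1,0)$ and $N_{3\ast}=(0,-\tfrac12(\epsilon-1)\delta_2,0)$, using the identity $(\epsilon-1)\delta_2/2\cdot(-1)+(-1)\cdot 0$. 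Then $\delta_2(\epsilon-1)-(\epsilon-1)\delta_2=0$, as required. The main obstacle is simply avoiding sign errors in these products; note that $\epsilon^2=1$ is used repeatedly. Note also that condition \eqref{eqn:delta} plays no role here; it is presumably needed later, for the triviality of the automorphism group.
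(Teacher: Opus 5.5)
Your route is the paper's: reduce to $c=c'=1$ and check each identity by explicit $3\times 3$ matrix arithmetic. The reduction is correct, and so is your treatment of the first, second and fourth identities. It is somewhat tidier than the paper's full products. Writing $\lambda_{\vec{x}}=\mathrm{id}_B+x_3E_{12}$ turns the first two identities into $-\gamma_1E_{12}=E_{12}\gamma_1$ (both sides are $-\epsilon E_{12}$) and $\phi_1E_{12}=E_{12}\phi_1$. Rewriting the last one as $(\phi_1+\mathrm{id}_B)N=0$, with $N=\gamma_1\psi_1^{-1}-\mathrm{id}_B$, reduces it to $\delta_2N_{2\ast}+2N_{3\ast}=0$. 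Your rows of $N$ agree with the paper's matrix for $\gamma_1\psi_1^{-1}-\mathrm{id}_B$ and satisfy this relation.

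The gap is the third identity $\phi_1\gamma_1=\gamma_1\phi_1$. Your reduction there is valid. Both matrices have zero $(2,1)$, $(2,3)$, $(3,1)$ entries, this zero pattern is preserved under products, and on it the diagonal entries simply multiply. So only the $(1,2)$, $(1,3)$, $(3,2)$ entries matter. But you never compute those entries. Saying that the fractions $\tfrac{1}{2}(\epsilon\pm 1)$ ``were chosen precisely so these cancel'' is an assertion, not a verification, and in a lemma whose whole content is these checks it cannot be skipped. It is easily supplied:
\begin{align*}
(\phi_1\gamma_1)_{12}&=-\delta_3-\tfrac{1}{2}\epsilon\delta_1\delta_2+\tfrac{1}{2}(\epsilon-1)\delta_1\delta_2=-\delta_3-\tfrac{1}{2}\delta_1\delta_2,\\
(\gamma_1\phi_1)_{12}&=\tfrac{1}{2}\epsilon\delta_1\delta_2-\delta_3-\tfrac{1}{2}(\epsilon+1)\delta_1\delta_2=-\delta_3-\tfrac{1}{2}\delta_1\delta_2,\\
(\phi_1\gamma_1)_{13}&=\tfrac{1}{2}(\epsilon+1)\delta_1-\delta_1=\tfrac{1}{2}(\epsilon-1)\delta_1,\\
(\gamma_1\phi_1)_{13}&=\epsilon\delta_1-\tfrac{1}{2}(\epsilon+1)\delta_1=\tfrac{1}{2}(\epsilon-1)\delta_1,\\
(\phi_1\gamma_1)_{32}&=-\epsilon\delta_2+\tfrac{1}{2}(\epsilon-1)\delta_2=-\tfrac{1}{2}(\epsilon+1)\delta_2,\\
(\gamma_1\phi_1)_{32}&=-\tfrac{1}{2}(\epsilon-1)\delta_2-\delta_2=-\tfrac{1}{2}(\epsilon+1)\delta_2.
\end{align*}
So both products equal the matrix displayed in the paper's proof, and with this added your argument is complete.
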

\begin{proof}All of the equalities are trivial when $c=0$ or $c'=0$. Hence, it is enough to verify them when $c=c'=1$.
    Write $\vec{x}=(x_1,x_2,x_3)^T$, and note that $\psi_{1}^{-1}(\vec{x}) = (*,*,-x_3)^T$. It is straightforward to check that
\begin{align*}
\gamma_1\lambda_{\psi_1^{-1}(\vec{x})} & = 
\begin{pmatrix}
\epsilon & \delta_3 - \epsilon x_3 & -\frac{1}{2}\delta_1(\epsilon+1)\\
0 & -\epsilon & 0 \\
0 & \frac{1}{2}\delta_2(\epsilon-1) & -1
\end{pmatrix}
 =\lambda_{\vec{x}}\gamma_1,\\
\phi_1\lambda_{\vec{x}} & = 
\begin{pmatrix}
-1 & -x_3 + \frac{1}{2}\delta_1\delta_2 & \delta_1\\
0 & -1 & 0 \\
0 & \delta_2 & 1
\end{pmatrix}
=\lambda_{\vec{x}}\phi_1,\\
\phi_1\gamma_1 & =
\begin{pmatrix}
-\epsilon & -\delta_3 -\frac{1}{2}\delta_1\delta_2 & \frac{1}{2}\delta_1(\epsilon-1)\\
0 & \epsilon & 0 \\
0 & -\frac{1}{2}\delta_2(\epsilon+1) & - 1
\end{pmatrix}=\gamma_1\phi_1,
\end{align*}
and this proves the first three equalities. Similarly, we compute that
\begin{align*}
\phi_1(\mathrm{id}_B - \gamma_1\psi_1^{-1}) & =
\begin{pmatrix}
-1 & \frac{1}{2}\delta_1\delta_2 & \delta_1\\
0 & -1 & 0 \\
0 & \delta_2 & 1
\end{pmatrix}
\begin{pmatrix}
1-\epsilon & \delta_3 -\delta_4\epsilon & -\frac{1}{2}\delta_1\\
0 & 1-\epsilon & 0\\
0 & \frac{1}{2}\delta_2(\epsilon-1) & 0
\end{pmatrix} \\
& =
\begin{pmatrix}
\epsilon-1 & \delta_4\epsilon-\delta_3   & \frac{1}{2}\delta_1\\
0 & \epsilon-1 & 0\\
0 & \frac{1}{2}\delta_2(1-\epsilon) & 0
\end{pmatrix} = \gamma_1\psi_1^{-1}-\mathrm{id}_B,
\end{align*}
and this proves the last equality.
\end{proof}

From Lemma \ref{lem:conditions} and Theorem \ref{thm:construction}, we obtain a skew brace
\[ A= (B\times C,\cdot ,\circ) = (\mathbb{F}_p^3\times \mathbb{F}_2,\cdot,\circ)\]
by defining the two operations by
\begin{align*}
    (\vec{x},c) \cdot (\vec{x}',c') & =  (\vec{x}+\phi_c(\vec{x}') ,c+c')\\
    ( (\phi_c\gamma_c)(\vec{x}),c)\circ ((\phi_{c'}\gamma_{c'})(\vec{x}'),c') & = ((\phi_{c+c'}\gamma_{c+c'})( \psi_{c'}^{-1}(\vec{x}) \circ \vec{x}'), c+ c')
\end{align*}
for all $\vec{x},\vec{x}'\in \mathbb{F}_p^3$ and $c,c'\in\mathbb{F}_2$. 

\begin{lem}\label{lem:order two}
    Let  $\vec{y} = (y_1,y_2,y_3)^T\in \mathbb{F}_p^3$ be such that 
    \begin{align*}
    ((\phi_1\gamma_1)(\vec{y}),1) \cdot ((\phi_1\gamma_1)(\vec{y}),1)& =(\vec{0},0),\\
    ((\phi_1\gamma_1)(\vec{y}),1) \circ ((\phi_1\gamma_1)(\vec{y}),1) & =(\vec{0},0).
    \end{align*}
    Then $\delta_2y_2 + 2y_3=0$ and $2y_1 + \delta_4y_2 - \frac{1}{2}\delta_1y_3-y_3y_2=0$.
\end{lem}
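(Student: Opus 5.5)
The plan is to unwind both conditions directly from the two operations of $A$ defined just before the statement, using that $\phi_1,\gamma_1,\psi_1$ are involutions, so that $\phi_0=\gamma_0=\psi_0=\mathrm{id}_B$ and $\phi_1^{-1}=\phi_1$, $\psi_1^{-1}=\psi_1$. Each condition turns into an equation in $B=\mathbb{F}_p^3$, and I will read off the stated relations coordinatewise.

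\emph{The $\cdot$-condition.} Put $\vec v=(\phi_1\gamma_1)(\vec y)$. The definition of $\cdot$ gives
\[(\vec v,1)\cdot(\vec v,1)=(\vec v+\phi_1(\vec v),0).\]
Since $\phi_1^2=\mathrm{id}_B$, we have $\phi_1(\vec v)=\gamma_1(\vec y)$, so the first condition reads
\[(\phi_1+\mathrm{id}_B)\gamma_1(\vec y)=\vec 0.\]
Next I compute $\gamma_1(\vec y)$. Its second coordinate is $-\epsilon y_2$ and its third is $\tfrac12(\epsilon-1)\delta_2y_2-y_3$; the first coordinate will not matter. The matrix $\phi_1+\mathrm{id}_B$ has zero first column and zero second row. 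Its third row is $(0,\delta_2,2)$, so the third coordinate of $(\phi_1+\mathrm{id}_B)\gamma_1(\vec y)$ is
\[-\epsilon\delta_2y_2+(\epsilon-1)\delta_2y_2-2y_3=-(\delta_2y_2+2y_3).\]
This yields the first relation. The first row $(0,\tfrac12\delta_1\delta_2,\delta_1)$ is $\tfrac12\delta_1$ times the third row, so it adds nothing new.

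\emph{The $\circ$-condition.} I apply the defining formula for $\circ$ with $c=c'=1$ and $\vec x=\vec x'=\vec y$. Since $1+1=0$, this gives
\[\bigl((\phi_0\gamma_0)(\psi_1^{-1}(\vec y)\circ\vec y),0\bigr)=\bigl(\psi_1(\vec y)\circ\vec y,0\bigr),\]
so the second condition is exactly $\psi_1(\vec y)\circ\vec y=\vec 0$ in $(B,\circ)$. Here
\[\psi_1(\vec y)=\bigl(y_1+\delta_4y_2-\tfrac12\delta_1y_3,\,-y_2,\,-y_3\bigr)^T.\]
Using the formula for $\circ$ on $B$, the second and third coordinates of $\psi_1(\vec y)\circ\vec y$ vanish automatically. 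The first coordinate is
\[(y_1+\delta_4y_2-\tfrac12\delta_1y_3)+y_1+(-y_3)y_2,\]
and setting it to zero gives the second relation.

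The only step needing care is the bookkeeping in the $\cdot$-computation, namely that $\phi_1\phi_1\gamma_1=\gamma_1$ and the correct entries of $\gamma_1$. Everything else is a direct substitution. If a sign fails to cancel as expected, I will recheck the third row of $\gamma_1$ against its stated definition.
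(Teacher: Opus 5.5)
Your proposal is correct and follows essentially the same route as the paper. The paper also reduces the $\cdot$-condition to $(\phi_1+\mathrm{id}_B)\gamma_1(\vec y)=\vec 0$, writing out that matrix, whose third row yields $-\delta_2y_2-2y_3$, and reduces the $\circ$-condition to $\psi_1^{-1}(\vec y)\circ\vec y=\vec 0$, reading off the first coordinate; your entries and signs agree with its computation.
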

\begin{proof}
    Observe that
      \begin{align*}
     ( (\phi_1\gamma_1)(\vec{y}),1) \cdot (  ( \phi_1\gamma_1)(\vec{y}),1) 
     & = ( (\phi_1\gamma_1)(\vec{y}) + (\phi_1\phi_1\gamma_1)(\vec{y}),0)\\
     & =  \left( \left( \begin{smallmatrix} 0 &  -\frac{1}{2} \delta_1\delta_2 & -\delta_1 \\0 & 0 & 0 \\ 0 & -\delta_2 & -2 \end{smallmatrix}\right)\left(\begin{smallmatrix}y_1\\y_2\\y_3\end{smallmatrix}\right), 0 \right)\\
     &= \left(\left( \begin{smallmatrix} -\frac{1}{2}\delta_1\delta_2y_2 - \delta_1y_3 \\ 0 \\ -\delta_2y_2 -2y_3\end{smallmatrix}\right), 0 \right),
    \end{align*}    
    and similarly we have
    \begin{align*}
    ( (\phi_1\gamma_1)(\vec{y}),1) \circ (  ( \phi_1\gamma_1)(\vec{y}),1) 
     & = ( (\phi_0\gamma_0)( \psi_1^{-1}(\vec{y}) \circ \vec{y}), 0)\\
     & = \left( \left(\begin{smallmatrix} 1 & \delta_4 & 
     -\frac{1}{2}\delta_1\\ 0 & -1 &0 \\0 & 0 & -1\end{smallmatrix}\right)\left(\begin{smallmatrix}y_1\\y_2\\y_3\end{smallmatrix}\right)\circ \left(\begin{smallmatrix}y_1\\y_2\\y_3\end{smallmatrix}\right) ,0\right)\\
     & = \left( \left(\begin{smallmatrix} 2y_1 + \delta_4y_2-\frac{1}{2}\delta_1y_3 -y_3y_2\\0\\0\end{smallmatrix}\right) ,0\right).
    \end{align*}
The claim is now clear because the above are equal to $(\vec{0},0)$.
\end{proof}

We are now ready to prove Theorem \ref{thm'} by showing that $\Aut(A)$ is trivial. Note that $\mathbb{F}_p^3\times \{0\}$ is a normal Sylow $p$-subgroup and hence a characteristic subgroup of $(A,\cdot)$. This means that for any $f\in \Aut(A)$, there exists $\beta\in \Aut(B)$ such that 
\[ f(\vec{x},0) = (\beta(\vec{x}),0) \mbox{ for all }\vec{x}\in \mathbb{F}_p^3.\]
By Lemma \ref{lem:Aut(B)}, we may write
\[ \beta = \begin{pmatrix}
   uv & s & t \\
   0& u & 0\\
   0 & 0 & v
\end{pmatrix},\mbox{ where }u,v\in \mathbb{F}_p^\times,\, s,t\in \mathbb{F}_p.\]
Since $f(\vec{0},1)$ has order two in both of  $(A,\cdot)$ and $(A,\circ)$, we can write 
\[ f(\vec{0},1) = ( (\phi_1\gamma_1)(\vec{y}),1),\]
where $\vec{y}=(y_1,y_2,y_3)^T\in \mathbb{F}_p^3$ has to satisfy
\begin{equation}\label{eqn:y} 
\begin{cases}
\delta_2y_2 + 2y_3=0\\[6pt]
2y_1 + \delta_4y_2 - \frac{1}{2}\delta_1y_3-y_3y_2=0
\end{cases}\end{equation}
by Lemma \ref{lem:order two}. We need to show that $\beta = \mathrm{id}_B$ and $\vec{y}=\vec{0}$.

From Proposition \ref{prop:trivial Aut1}, we know that
\[ \beta\phi_1= \phi_{1}\beta,\quad\beta \gamma_1 = \gamma_{1}\lambda_{\vec{y}}\beta,\mbox{ and } \]
\begin{equation}\label{eqn:lambda}(\lambda_{\vec{y}}\beta -\psi_{1}^{-1}\beta\psi_1 )(\vec{x})  =(\lambda_{ (\psi_{1}^{-1}\beta\psi_{1})(\vec{x})} - \mathrm{id}_B)(\vec{y}) \end{equation}
has to hold for all $\vec{x} = (x_1,x_2,x_3)^T\in\mathbb{F}_p^3$. By comparing the $(1,3)$- and $(3,2)$-entries in  $\beta\phi_1=\phi_1\beta$, we get that
\[ t + \delta_1uv = -t +\delta_1v,\quad \delta_2v =\delta_2u.\]
Since $\delta_2\neq 0$ by choice, they imply that
\[ u =v,\quad t = \frac{1}{2}\delta_1 u(1-u).\]
From here, we conduct our calculations with 
\[ \beta = \begin{pmatrix}
u^2 & s & \frac{1}{2}\delta_1 u(1-u)\\
0 & u & 0\\
0 & 0 & u
\end{pmatrix}.\]
By comparing the $(1,2)$-entries in $\beta\gamma_1=\gamma_1\lambda_{\vec{y}}\beta$, we see that
\[-\epsilon s + \delta_3u^2 + \frac{1}{4}\delta_1\delta_2u(u-1)(1-\epsilon) = \epsilon s + \delta_3u + \epsilon u y_3,
\]
which simplifies to
\begin{equation}\label{eqn:s1}
2\epsilon s + \epsilon uy_3 + u(1-u)\left( \delta_3 + \frac{1}{4}\delta_1\delta_2(1-\epsilon)\right)=0.
 \end{equation}
Moreover, we compute that
\[ \lambda_{\vec{y}}\beta - \psi_1^{-1}\beta\psi_1 = \begin{pmatrix}
0 & 2s +\delta_4u(1-u) + uy_3 & \frac{1}{2}\delta_1u(1-u)\\
0 & 0 & 0 \\
0 & 0 & 0 
\end{pmatrix}.\]
Note that $(\psi_1^{-1}\beta \psi_1)(\vec{x}) = (*,*,ux_3)$. Thus, by taking $(x_2,x_3)=(1,0)$, in which case $\lambda_{(\psi_1^{-1}\beta \psi_1)(\vec{x})} = \mathrm{id}_B$, we deduce from \eqref{eqn:lambda} that
\begin{equation}\label{eqn:s2}
 2s  + \delta_4u(1-u) + uy_3 =0\end{equation}
has to hold. By taking $x_3=1$, similarly we see from \eqref{eqn:lambda} that
\begin{equation}\label{eqn:y2} \frac{1}{2}\delta_1u(1-u) = u y_2. \end{equation}
Now, multiplying \eqref{eqn:s2} by $-\epsilon$ and then adding it to \eqref{eqn:s1}, we obtain
\[ u(1-u) \left(\delta_3 + \frac{1}{4}\delta_1\delta_2(1-\epsilon) -\epsilon \delta_4 \right)=0.\]
The term in the large parentheses is non-zero by our assumption \eqref{eqn:delta}, so it follows that $u=1$. We then obtain from \eqref{eqn:y2} that $y_2=0$, and in turn from \eqref{eqn:y} that $y_3=y_1=0$. This yields $\vec{y}=\vec{0}$. Note that $s =0$ has to hold now by \eqref{eqn:s2} and so $\beta = \mathrm{id}_B$. We have thus shown that $f$ is the identity on both $B$ and $C$, whence $ f = \mathrm{id}_A$.

\section*{Acknowledgments}

This work is supported by JSPS KAKENHI 24K16891.

\end{document}